\theoremstyle{definition}
\numberwithin{equation}{section}
\newtheorem{theorem}{Theorem}[section]
\newtheorem{lemma}[theorem]{Lemma}
\newtheorem{definition}[theorem]{Definition}
\newtheorem{example}[theorem]{Example}
\newtheorem{rek}[theorem]{Remark}
\newcommand{\burl}[1]{\textcolor{blue}{\url{#1}}}
\title{Tinkering with Lattices: A New Take on the Erd\H{o}s Distance Problem}
\author[ Boldyriew, Kim,  Miller, Palsson,  Sovine,  Trejos Suárez, Zhao ]{Elzbieta Boldyriew, Elena Kim, Steven J. Miller, Eyvindur Palsson, Sean Sovine, Fernando Trejos Suárez, Jason Zhao}
\date{\today}
\thanks{ This research was supported NSF grant DMS1947438 and Williams College. The third named author was supported by  NSF grant DMS1561945, the fourth named author was supported by Simons Foundation Grant \#360560, the fifth named author was supported by NSF grant DMS1907435, the sixth named author was supported by Yale University, and finally the last named author was supported by University of California-Los Angeles}
\begin{document}

\maketitle

\begin{abstract}
The Erd\H{o}s distance problem concerns the least number of distinct distances that can be determined by $N$ points in the plane. The  integer lattice with $N$ points is known as \textit{near-optimal}, as it spans $\Theta(N/\sqrt{\log(N)})$ distinct distances, the lower bound for a set of $N$ points (Erd\H{o}s, 1946). The only previous non-asymptotic work related to the Erd\H{o}s distance problem that has been done was for $N \leq 13$. We take a new non-asymptotic approach to this problem in a model case, studying the distance distribution, or in other words, the plot of frequencies of each distance of the $N\times N$ integer lattice. In order to fully characterize this distribution,
 we adapt previous number-theoretic results from Fermat and Erd\H{o}s in order to relate the frequency of a given distance on the lattice to the sum-of-squares formula.

We study the distance distributions of all the lattice's possible subsets;  although this is a restricted case, the structure of the integer lattice allows for the existence of subsets which can be chosen so that their distance distributions have certain properties, such as emulating the distribution of randomly distributed sets of points for certain small subsets, or emulating that of the larger lattice itself. We define an error which compares the distance distribution of a subset with that of the full lattice. The structure of the integer lattice allows us to take subsets with certain geometric properties in order to maximize error; we show these geometric constructions explicitly. Further, we calculate explicit upper bounds for the error when the number of points in the subset is $4$, $5$, $9$ or $\left \lceil N^2/2\right\rceil$ and prove a lower bound in cases with a small number of points.
\end{abstract}

\tableofcontents

\section{Introduction}
In 1946, Paul Erd\H{o}s \cite{Er1} proposed the now-famous  Erd\H{o}s distinct distances problem: given $N$ points in a plane, what is the minimum number of distinct distances, $f(N)$,  they can determine?
He accompanied this question with the first bounds on $f(N)$, 
\begin{equation}\sqrt{N-\frac{3}{4}}-\frac{1}{2} \ \leq \ f(N) \ \leq \ \frac{cN}{\sqrt{\log N}},\end{equation} and further conjectured that the upper bound was tight---to this day, nobody has found evidence to contradict this conjecture. However, since 1946, incidence theory and algebraic geometry have provided a series of improvements on the original lower bound, culminating with Guth and Katz's seminal result in 2015 \cite{GK}, which proved a lower bound of $\Omega(N/\log N)$.

Since Erd\H{o}s's original upper bound, coming from an estimate for the number of distinct distances on the $\sqrt{N} \times \sqrt{N}$ integer lattice \cite{EF}, has not been improved on to this day, any set with $O(N/\sqrt{\log N})$ distinct distances is known as \emph{near-optimal}. Erd\H{o}s further conjectured in 1986 \cite{Er2} that any near-optimal set has a lattice structure, although the truth of this conjecture remains an open problem for large values of $N$. 

In addition to the  Erd\H{o}s distinct distances problem, a significant amount of work has been published on related problems which analyze aspects of distributions of distinct distances on planar point sets. The unit distance problem, for instance, focuses on the number of times a single given distance---often, the unit distance---can appear in a planar set of $N$ points. However, most of the work done on these subjects has been asymptotic; previous non-asymptotic work has only been conducted on $N \leq 13$ \cite{BMP}. 

In this paper, we take a novel approach and examine the whole distance distribution for the lattice and its subsets in a non-asymptotic setting.  Although working in $\mathbb{Z}^2$ is a simplification, our work is complicated by considering the whole distance distribution---namely, taking into account the frequency with which each distance appears on the lattice---rather than working asymptotically with only the number of distinct distances.
In particular, we  first examine the distance distribution for the lattice, characterizing its behavior and applying number-theoretic methods to determine an upper bound for the frequency of its most common distance, showing the asymptotic bound in Equation ~\eqref{upperbound}. We then turn to the distance distributions of subsets of the lattice and compare them to the distance distribution of the lattice itself.  Although the sets are subsets of a highly regular set, the behavior of the distance distributions for these sets can vary widely. Some subsets have distance distributions that highly mimic that of the full lattice, while others have distance distributions that are similar to that of a random set. We devise in Section ~\ref{errorsection} an error that measures how similar or different a subset's distance distribution is from that of the lattice itself.

For the upper bounds, we find specific configurations  of $p$ points that maximize the error and then calculate the error of these configurations, demonstrated through the calculations in Examples ~\ref{ex:p=4} through ~\ref{ex:checkerboard}. For the lower bounds, we may find a concrete bound in the case of small subsets of the lattice, giving us the bound in Equation ~\eqref{errorboundsmallsubset}. In the case of larger subsets, we take a more theoretical approach and construct theoretical optimal distance distributions, ones that cannot necessarily be realized by an actual subset of the lattice. We then describe the error given by these optimal distance distributions and prove a lower bound on error for some values of $p$. 

Thus, in this paper we seek to highlight preliminary results on this new perspective on the Erd\H{o}s distinct distances problem. We begin with the following section which introduces  some definitions and proves upper bounds for the frequency of the most common distance on the lattice.

\section{Introducing Distance Distributions}
\begin{figure}
    \centering
    \includegraphics[scale=.4]{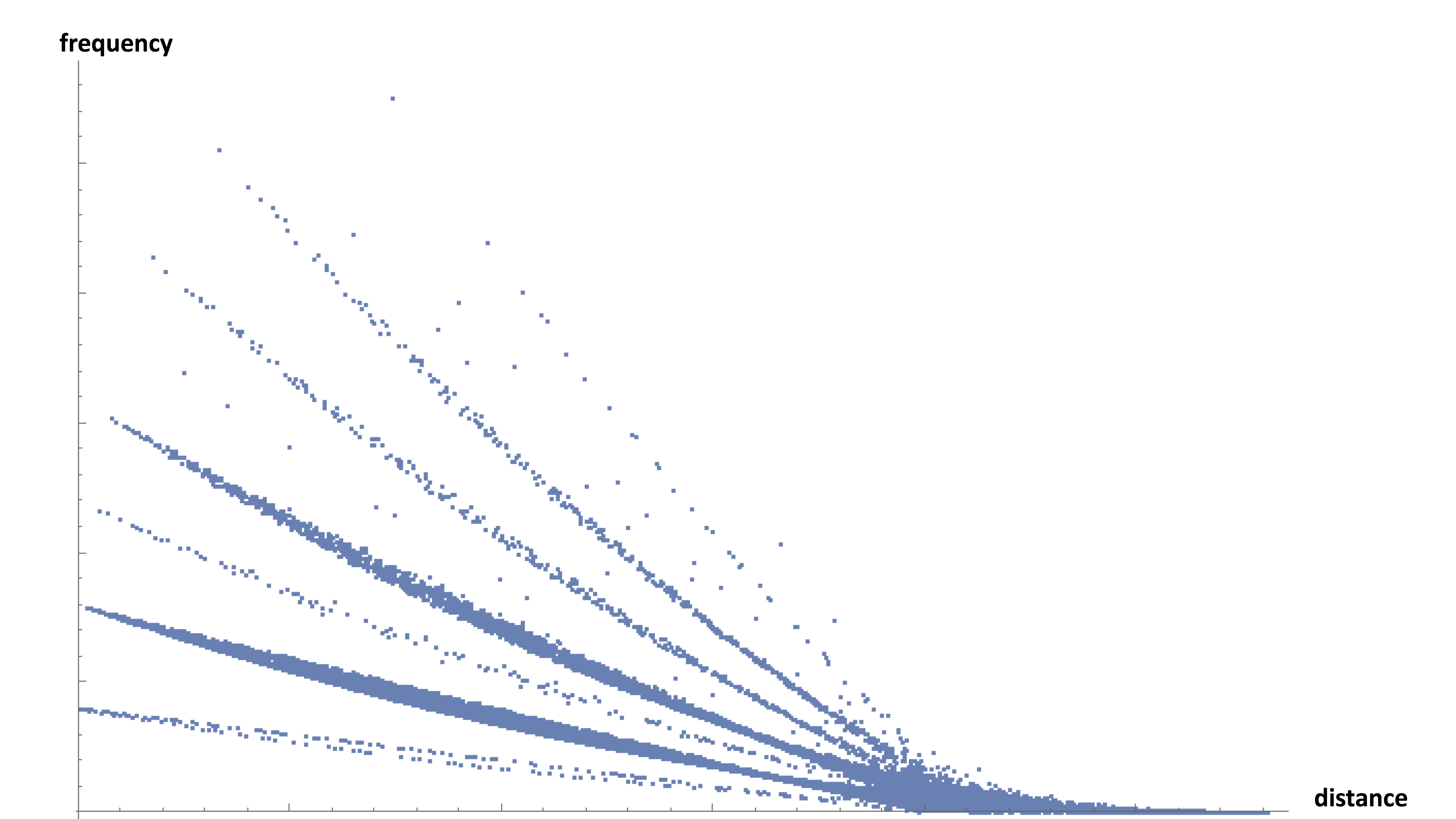}
    \caption{The distance distribution for the $200 \times 200$ lattice.}
    \label{fig:distance_distribution}
\end{figure}
Throughout this section, we characterize the distance distribution of the $N\times N$ integer lattice, as seen in Figure \ref{fig:distance_distribution}. For our characterization, we begin with some notational definitions and a lemma. 

\begin{definition}
	For a fixed $N$,  denote by $\mathcal{L}_N\subset \mathbb{Z}^2$ the $N\times N$ integer lattice, where \begin{equation}
		\mathcal{L}_N=\{ (x,y)\in \mathbb{Z}^2\ \mid  \ 0\leq x \leq N-1,\; 0\leq y\leq N-1 \}
	.\end{equation}
\end{definition}

\begin{definition}\label{def:D_n}
For a fixed $N$, denote by $\mathcal{D}_N$ the set of distinct distances which appear at least once on the $N\times N$ lattice.
\end{definition}
\begin{definition}
For any $d \ \geq \ 1$, denote by $L_{\sqrt{d}}$ the number of times that the distance $\sqrt{d}$ appears on $\mathcal{L}_N$.
\end{definition}

\begin{definition}
For any $d\geq 1$, denote by $S_{\sqrt{d}}$ the number of times that the distance $\sqrt{d}$ appears in a given subset $S\subseteq \mathcal{L}_N$.
\end{definition}

\begin{lemma}\label{distancecount}
For any $d\geq 1$, 
\begin{equation}\label{equationforLd}
	L_{\sqrt{d}} \ = \ \sum_{\substack{a^2+b^2=d \\ a\geq 1,\; b\geq 0}}^{} 2\left( N-a \right) \left( N-b \right).
\end{equation}

\end{lemma}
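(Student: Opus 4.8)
The plan is to compute $L_{\sqrt d}$ directly as the number of unordered pairs of points of $\mathcal{L}_N$ at distance $\sqrt d$ (equivalently, counting each realizing segment once), organizing the count by the displacement vector between the two points. The central observation is that a pair $\{P,Q\}\subseteq\mathcal{L}_N$ with $\|P-Q\|=\sqrt d$ is the same datum as a displacement $v=Q-P=(a,b)\in\mathbb{Z}^2$ with $a^2+b^2=d$, taken up to the sign ambiguity $v\leftrightarrow -v$ that reflects forgetting which endpoint is which. So I would first reduce the identity to enumerating such displacement vectors and, for each, counting how many times it fits inside the lattice.

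First I would establish the placement count: for a fixed displacement $(a,b)$, the number of points $P=(x,y)\in\mathcal{L}_N$ with $P+(a,b)\in\mathcal{L}_N$ is exactly $(N-|a|)(N-|b|)$ when $|a|,|b|\le N-1$, and $0$ otherwise. Indeed $x$ must satisfy both $0\le x\le N-1$ and $0\le x+a\le N-1$, leaving $N-|a|$ admissible values, and independently $N-|b|$ values for $y$. Displacements with $|a|\ge N$ or $|b|\ge N$ come from no genuine pair and contribute nothing, so the sum effectively ranges over realizable vectors with $0\le a,b\le N-1$.

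Next I would assemble the total by summing $(N-|a|)(N-|b|)$ over one representative per unordered pair. Since $(N-|a|)(N-|b|)$ depends only on $(|a|,|b|)$, it is invariant under the sign changes $(a,b)\mapsto(\pm a,\pm b)$ and the coordinate swap $(a,b)\mapsto(b,a)$, and this symmetry is precisely what collapses the count onto representatives with $a\ge 1$ and $b\ge 0$. The factor of $2$ in Equation \eqref{equationforLd} records the surviving multiplicity: for an off-diagonal, off-axis solution with $a,b\ge 1$ it is the two sign choices of $b$ that remain once $a>0$ is fixed and we pass to unordered pairs (the diagonal case $a=b$ behaves identically, with only one term surviving the swap), while for an axis solution with $b=0$, where $d$ is a perfect square, it instead accounts for the two coordinate-swapped directions $(a,0)$ and $(0,a)$ that the restriction $a\ge 1$ would otherwise drop.

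The main obstacle I anticipate is bookkeeping these multiplicities so that a single factor of $2$ is correct uniformly across all three regimes---the generic solutions with $0<b<a$, the diagonal solutions $a=b$, and the axis solutions $b=0$---since their displacement orbits under the sign-and-swap symmetry have different sizes. The cleanest safeguard is to first compute the number of \emph{ordered} pairs as the unrestricted sum $\sum_{a^2+b^2=d}(N-|a|)(N-|b|)$, then halve it and regroup onto the region $a\ge 1,\,b\ge 0$, verifying the regrouping on each regime separately; this is where care is needed, but no regime presents a genuine difficulty, and the edge cases (perfect-square $d$ and $a=b$) are exactly the ones that should be checked explicitly against the stated formula.
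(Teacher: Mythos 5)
Your proposal is correct and takes essentially the same approach as the paper: both decompose the count by displacement vector, use the placement count $(N-|a|)(N-|b|)$, and resolve the sign/swap multiplicities across the regimes $0<b<a$, $a=b$, and $b=0$. The only difference is bookkeeping---you count ordered pairs over all signed displacements and then halve, while the paper directly counts unordered pairs by placing the vectors $(a,b)$ and $(a,-b)$ (respectively $(a,0)$ and $(0,a)$ in the axis case) inside the lattice.
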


\begin{proof}
First, note that each occurrence of the distance $\sqrt{d}$ comes from two points $(w,x)$, $(y,z)$ satisfying $\sqrt{(w-y)^2 + (x-z)^2}=\sqrt{d}$. 

Let $a,b$ be an ordered pair with $a^2+b^2 = d$ and suppose both $a,b>0$. Notice that for any point $(w,x)$ on the integer lattice, $\left( w+a,x+b \right)  $ is in $\mathcal{L}_N$ if and only if $0\leq w\leq N-a-1$ and $0\leq x\leq N-b-1$; hence there are $(N-a)(N-b)$ such points $(w,x)$. The same logic can be repeated for $(w+a,x-b)$ to conclude that there are precisely $2(N-a)(N-b)$ pairs of points separated by an $x $-distance of $a$ and a $y$-distance of $b$.

In the case where $a>b=0$, by our assumption  that $a>0$, we need only find the number of pairs $(x,y) \in \mathcal{L}_N$ for which $(x+a,y)\in  \mathcal{L}_N,\ (x,y+a)\in \mathcal{L}_N$. There are $2N(N-a)$ such pairs. 

As this accounts for any possible occurrence of $\sqrt{d}$ on $\mathcal{L}_{N}$, we are done. As a final check, we note that the well-known identity

\begin{equation}
		\sum_{a=1}^{N-1}\sum_{b=0}^{N-1}2(N-a)(N-b) \ = \ \frac{N^2\left( N^2-1 \right) }{2} \ = \ \binom{N^2}{2}
	\end{equation}
confirms that we have counted all $\binom{N^2}{2}$ distances on the lattice.	
\end{proof}

As seen in Figure \ref{fig:distance_distribution}, the frequencies of distances are arranged in distinct curves. Clearly, the curve which $L_{\sqrt{d}}$ falls on is closely tied to the distinct ways $d$ can be written as the sum of two squares; if $d$ has $m$ representations as the sum of two squares, then $L_{\sqrt{d}}$ falls on the $m$th highest curve.
In fact, this is a subject which has been studied in some detail, which we summarize below.

\begin{definition}
	For any $n \in \mathbb{Z} $, let $r_2(d)$ be the number of ordered pairs $\left( a,b \right) \in \mathbb{Z}^2$ such that $a^2+b ^2=d$.
\end{definition}
We state the following classical result due to Fermat; for a survey of the many possible proofs of this theorem, see \cite{Co}.
\begin{theorem}[Fermat, 1640]\label{r2n}
	If $d$ is a positive integer with prime factorization $d=2^{f}p_1^{g_1}\cdots p_{m} ^{g_{m}}q_1 ^{h_1}\cdots q_{n} ^{h_{n}}$, where for any $i$, $p_{i}\equiv 1\pmod{4}$ and $q_{i}\equiv 3\pmod{4}$, then \begin{equation}
		r_2(d)=\begin{cases} 
			4\left( g_1+1 \right) \cdots \left( g_{m}+1 \right)  & \; {\rm all} \ {\rm the } \;  h_{i}\; {\rm are} \ {\rm even,} \;   \\
			0 & \; {\rm otherwise.} \;   \\
		\end{cases}
	\end{equation}
\end{theorem}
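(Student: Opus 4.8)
The plan is to pass to the ring of Gaussian integers $\mathbb{Z}[i]$, where $r_2(d)$ acquires a clean interpretation. Since $a^2+b^2=(a+bi)(a-bi)=N(a+bi)$ is exactly the norm of the Gaussian integer $a+bi$, and distinct ordered pairs $(a,b)$ give distinct Gaussian integers, the quantity $r_2(d)$ is precisely the number of $z\in\mathbb{Z}[i]$ with $N(z)=d$. The whole strategy is therefore to reduce the counting problem to the factorization of $d$ inside $\mathbb{Z}[i]$ and then tabulate, prime by prime, how many factorizations of norm $d$ can occur.

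First I would recall the structural facts about $\mathbb{Z}[i]$: it is a Euclidean domain with respect to $N$, hence a unique factorization domain, its units are exactly $\{1,-1,i,-i\}$, and the norm is multiplicative, $N(zw)=N(z)N(w)$. This multiplicativity is the engine that converts a factorization of the integer $d$ into a constraint on the norms of the Gaussian factors of any $z$ with $N(z)=d$.

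The main obstacle, and the genuinely arithmetic input, is the classification of Gaussian primes lying over each rational prime. There are three cases: the prime $2=-i(1+i)^2$ ramifies, with $(1+i)$ a Gaussian prime of norm $2$; every prime $p\equiv 1\pmod 4$ splits as $p=\pi\bar\pi$ into two non-associate Gaussian primes each of norm $p$; and every prime $q\equiv 3\pmod 4$ stays inert, remaining prime in $\mathbb{Z}[i]$ with norm $q^2$. The delicate step is the splitting of $p\equiv 1\pmod 4$: one first shows $-1$ is a quadratic residue modulo such $p$ (via Euler's criterion or Wilson's theorem), so that $p\mid n^2+1=(n+i)(n-i)$ for some $n$; since $p$ divides neither factor, $p$ cannot itself be a Gaussian prime, and unique factorization forces $p=\pi\bar\pi$.

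Finally I would assemble the count. Writing a candidate $z$ as $u\,(1+i)^{c}\prod_i\pi_i^{a_i}\bar\pi_i^{b_i}\prod_j q_j^{e_j}$ with $u$ a unit, the norm condition $N(z)=d=2^f\prod_i p_i^{g_i}\prod_j q_j^{h_j}$ decouples into $c=f$, each $a_i+b_i=g_i$, and each $2e_j=h_j$. The contribution of $2^f$ is forced up to a unit, giving a single choice; each split prime $p_i$ contributes $\pi_i^{a_i}\bar\pi_i^{g_i-a_i}$ for $0\le a_i\le g_i$, giving $g_i+1$ choices; and each inert prime $q_j$ can only supply norm in even powers, so $q_j^{h_j}$ is attainable exactly when $h_j$ is even (with the single choice $e_j=h_j/2$) and is impossible otherwise. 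Multiplying the per-prime counts together with the four unit choices yields $4\prod_i(g_i+1)$ when all $h_j$ are even and $0$ otherwise, matching the statement. An alternative route that avoids ring theory is to prove $r_2(n)/4=\sum_{d\mid n}\chi(d)$ for the nonprincipal character $\chi$ modulo $4$ and evaluate this multiplicative function on prime powers, but the Gaussian-integer argument is the most transparent; see \cite{Co} for the full survey of proofs.
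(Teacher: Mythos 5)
Your proof is correct, but note that the paper itself does not prove this theorem at all: it states the result as classical, cites Conrad's survey \cite{Co} for the many known proofs, and offers only a motivational remark that the formula is built inductively from the prime case ($a^2+b^2=p$ has an essentially unique solution when $p\equiv 1\pmod{4}$ and none when $p\equiv 3\pmod 4$), with the factor $4$ accounting for the signs $\pm a,\pm b$. Your Gaussian-integer argument is a complete and standard route---indeed one of those surveyed in \cite{Co}---and all the key steps are in place: the identification of $r_2(d)$ with the number of $z\in\mathbb{Z}[i]$ of norm $d$, the splitting behavior of rational primes (ramified $2$, split $p\equiv 1\pmod 4$, inert $q\equiv 3\pmod 4$), and the per-prime exponent count giving $4\prod_i(g_i+1)$ together with the parity obstruction at inert primes. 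Compared with the elementary composition-of-forms induction the paper gestures at, the ring-theoretic packaging buys you exactly the two things that make elementary proofs delicate: multiplicativity (composition of representations) and the uniqueness needed to avoid over- or undercounting, both of which follow at once from unique factorization in $\mathbb{Z}[i]$. Two small points to make explicit in a full write-up: the count of $4\prod_i(g_i+1)$ presupposes that you fix one representative per associate class of Gaussian primes, so that each $z$ of norm $d$ has a \emph{unique} expression $u(1+i)^{f}\prod_i \pi_i^{a_i}\bar\pi_i^{g_i-a_i}\prod_j q_j^{h_j/2}$; and the assertion that $\pi_i$ and $\bar\pi_i$ are non-associate deserves a line of justification, since if they were associates the $g_i+1$ choices for $a_i$ would not yield distinct elements.
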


For some motivation towards this result, recall that the equation $a^2+b^2=p$, where $a,b>0$ and $p$ is prime, has a unique solution in the case where $p\equiv 1 \pmod{4}$ and no solution otherwise. The proof of the theorem relies on building inductively upon this result. The coefficient $4$ simply accounts for the options of $\pm a, \pm b$. 

\begin{rek}
	The pairs $(a,b)\in \mathbb{Z}^2$ which are counted in the above sum may be negative; this contradicts our original condition for ordered pairs $(a,b)$ in Lemma \ref{distancecount},  which only required $a\geq 1,\; b\geq 0$. This is intentional, as both quantities are calculated through different methods and are used for different purposes. In particular, because of the way these pairs are counted, we see that for a distance $\sqrt{d}$ on the $m$-th curve, $r_2(d)=4m$.
\end{rek}
\begin{definition}
For any $k\geq 1$, let $\sqrt{ n_{k} }$ be the smallest distance on the $k$-th curve, i.e., $n_{k}$ is the smallest positive integer such that $r_2(n_{k})=4k$.
\end{definition}
We use these resuls to find the most common distance on the integer lattice, which proves to be useful later. 

Recalling the original lattice distance distribution seen in Figure \ref{fig:distance_distribution}, we note that the most common distance on each individual curve is generally the leftmost, i.e., the smallest. For our estimates, then, we hope to take the smallest distance on the $k$-th curve---defined above as $n_k$---as an approximation of the most common distance on each curve. This proves a reasonable assumption: indeed, suppose two integers $d_1<d_2$ are such that $r_2(d_1)=r_2(d_2)=k$, yet $\sqrt{d_2}$ is the more common distance on some $N\times N$ integer lattice. Using Lemma \ref{distancecount}, this implies we may find at least two ordered pairs of integers $(a_1,b_1),(a_2,b_2)$ satisfying $a_i^2+b_i^2=d_i$ for each $i$, for which $2(N-a_1)(N-b_1) <2(N-a_2)(N-b_2) $, i.e., $a_2b_2-a_2-b_2>a_1b_1-a_1-b_1$. As $d_i \geq 2a_ib_i$ by the inequality of arithmetic and geometric means, with $d_i\leq 2\sqrt{2}a_ib_i$, we thus have a generous bound of $d_2 \leq \sqrt{2} d_1$. 

In our search for the most common distance on the lattice, we thus narrow our focus to the set of integers $n_1,n_2,\ldots$, and their asymptotic frequencies on the $N\times N$ lattice. By our previous argument, these $n_k$ values are at least within a constant of $\sqrt{2}$ of the true most common distance on the $k$-th curve. With these guiding principles, we now attempt to bound the size of the integers in this sequence.

\begin{lemma}
Let $p_1<p_2<\cdots$ be all the primes satisfying $p_{i}\equiv 1\pmod{4}$, listed in increasing order (so that $p_1=5,\ p_2=13,\ p_3=17$, and so on). Suppose $k=q_1^{a_1}\cdots q_{m}^{a_{m}}$, where $q_1,\ldots , q_{m}$ are any $m$ distinct primes and $q_1>q_2>\cdots >q_m$. Then an upper bound for $n_k$ is given by \begin{equation}
	\left( \underbrace{p_1\cdots p_{a_1}}_{a_1\; {\rm primes} \;  } \right) ^{q_1-1}\!\left( \underbrace{p_{a_1+1}\cdots p_{a_1+a_2}}_{a_2\; {\rm primes} \;  } \right) ^{q_2-1}\!\!\!\! \!\! \!\! \cdots \left( \underbrace{ p_{a_1+\cdots+a_{m-1}+1}\cdots p_{a_1+\cdots+p_{a_1+\cdots+a_{m}}}}_{a_{m}\; {\rm primes} \;  } \right) ^{q_{m}-1}.
	\end{equation}
\end{lemma}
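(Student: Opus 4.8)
The plan is to read the structure of $n_k$ directly off Fermat's formula (Theorem~\ref{r2n}) and then exhibit a single explicit integer $d$ with $r_2(d)=4k$ whose value equals the claimed bound; since $n_k$ is by definition the smallest integer with $r_2(n_k)=4k$, any valid construction immediately furnishes an upper bound. First I would apply Theorem~\ref{r2n} to a factorization $d=2^{f}\prod_i p_i^{g_i}\prod_j q_j^{h_j}$ and record that $r_2(d)=4k$ holds precisely when every $h_j$ is even and $\prod_i (g_i+1)=k$. Since the power of $2$ and the primes $q_j\equiv 3\pmod 4$ contribute nothing to $r_2$ but only enlarge $d$, the economical construction we seek may be taken with $f=0$ and all $h_j=0$. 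This reduces the task to keeping $\prod_i p_i^{g_i}$ small over tuples of positive exponents $g_i$ with $\prod_i(g_i+1)=k$, where the $p_i$ are distinct primes congruent to $1\pmod 4$.

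Next I would fix a convenient factorization of $k$ together with a convenient assignment of primes. The factorization I would use is the one induced by the prime factorization $k=q_1^{a_1}\cdots q_m^{a_m}$: namely, view $k$ as a product of $a_1+\cdots+a_m$ factors (each at least $2$) consisting of $a_i$ copies of $q_i$ for each $i$. A factor of value $q_i$ corresponds to an exponent $g=q_i-1$. It then remains to attach a distinct prime $p\equiv 1\pmod 4$ to each factor so as to keep $\prod p^{g}$ small. By the rearrangement inequality applied to $\sum_j g_j\log p_j$, the product is minimized by pairing the largest exponents with the smallest primes; since $q_1>q_2>\cdots>q_m$, this forces the $a_1$ exponents equal to $q_1-1$ to receive the smallest primes $p_1,\dots,p_{a_1}$, the $a_2$ exponents equal to $q_2-1$ to receive $p_{a_1+1},\dots,p_{a_1+a_2}$, and so on.

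Finally I would verify that this explicit $d$ does the job. Written out, the chosen $d$ is exactly the displayed product $\bigl(p_1\cdots p_{a_1}\bigr)^{q_1-1}\cdots\bigl(p_{a_1+\cdots+a_{m-1}+1}\cdots p_{a_1+\cdots+a_m}\bigr)^{q_m-1}$, and by construction $\prod_i(g_i+1)=q_1^{a_1}\cdots q_m^{a_m}=k$, so Theorem~\ref{r2n} gives $r_2(d)=4k$. Because the $p_j$ employed are all distinct primes $\equiv 1\pmod 4$ and each exponent $q_i-1\geq 1$, this is a legitimate integer with the required representation count, so $n_k\leq d$.

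I expect the only genuinely delicate step to be the combinatorial optimization of which factorization of $k$ to use and how to distribute primes among exponents. Since the lemma asserts only an upper bound, I do not need a provably globally optimal factorization; it suffices to commit to the prime-factorization splitting above and to justify the greedy prime-to-exponent pairing by a short exchange argument, observing that swapping a larger prime sitting on a larger exponent with a smaller prime on a smaller exponent never increases the product. The bookkeeping in the running indices $a_1,\;a_1+a_2,\;\dots$ is the main place where care is needed to make the explicit expression match the displayed bound precisely.
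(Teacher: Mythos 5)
Your proposal is correct and is essentially the paper's own argument: exhibit the displayed integer $d$, apply Theorem \ref{r2n} to compute $r_2(d) = 4\left((q_1-1)+1\right)^{a_1}\cdots\left((q_m-1)+1\right)^{a_m} = 4k$, and conclude $n_k \leq d$ by the minimality in the definition of $n_k$. The rearrangement-inequality and exchange-argument material is harmless but unnecessary, since, as you yourself observe, any integer with $r_2(d)=4k$ furnishes an upper bound regardless of whether the prime-to-exponent assignment is optimal.
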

\begin{proof}
	Let $n_k'$ be the bound presented in the lemma. As $n_k$ is the minimum value for which $r_2(n_k)=4k$, it suffices to show that $n_k'$ has this property, in order to determine that it is an upper bound. By a direct application of Theorem \ref{r2n},
	\begin{equation}
	r_2(n_k') \ = \ 4((q_1-1)+1)^{a_1}\cdots((q_m-1)+1)^{a_m} \ = \ 4k.
	\end{equation}\end{proof}

Note that the sequence $n_1', n_2',\ldots$ of bounds as presented by this lemma is not strictly increasing. In particular, the bounds in Lemma \ref{enserio} are attainable for infinitely many values of $k$. In particular, for $k=2^{m}$, we see $n_{k}'=p_1\cdots p_{m}=n_k$, as $r_2(2^{f}p_1^{g_1}\cdots p_{m} ^{g_{m}}q_1 ^{h_1}\cdots q_{n} ^{h_{n}})=4(g_1+1)\cdots (g_m+1)=4\cdot 2^m$ implies $g_1=\cdots=g_m=1$; additionally, for $k$ prime, $r_2(n)=r_2(2^{f}p_1^{g_1}\cdots p_{m} ^{g_{m}}q_1 ^{h_1}\cdots q_{n} ^{h_{n}})=4(g_1+1)\cdots (g_m+1)=4\cdot k$ implies $n$ has a sole prime divisor with exponent $k-1$.

It can be calculated that the proposed $n_k'$ bound matches the actual value for $n_k$ for most small values of $k$; in fact, we conjecture that the two are equivalent for the vast majority of $k$, although the closest result we may achieve is to show that the two sequences are asymptotic. The following lemma assists in finding explicit bounds for $n_k$. 

\begin{lemma}\label{enserio}
Let $p_1<p_2<\cdots$ be all the primes satisfying $p_{i}\equiv 1\pmod{4}$, listed in increasing order 
. For any $k\geq 1$,
\begin{equation}\label{boundingnk}
	\prod_{i=1}^{\left\lfloor\log_2(k) \right\rfloor }p_{i} \ \ll \ n_{k} \ \leq \ 5^{k-1}
.\end{equation}
\end{lemma}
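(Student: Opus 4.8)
The plan is to treat the two inequalities of \eqref{boundingnk} separately; the upper bound is immediate from Fermat's formula, while the lower bound carries all of the content.

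For the upper bound, I would simply exhibit a witness. By \Cref{r2n}, the integer $5^{k-1}$ (a single prime $\equiv 1 \pmod 4$ raised to the power $k-1$) satisfies $r_2(5^{k-1}) = 4\big((k-1)+1\big) = 4k$. Since $n_k$ is by definition the least positive integer with $r_2(n_k) = 4k$, this forces $n_k \le 5^{k-1}$.

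For the lower bound, I would first pin down the shape of the extremal $n_k$. By \Cref{r2n}, appending a factor of $2$ or any prime $\equiv 3 \pmod 4$ to an integer either leaves $r_2$ unchanged or annihilates it, so the minimizer contains no such factors; hence $n_k = \prod_i p_{j_i}^{e_i}$ is a product of primes $\equiv 1 \pmod 4$ with $\prod_i (e_i+1) = k$. Minimality further forces the use of the smallest available primes $p_1 < p_2 < \cdots < p_M$ with non-increasing exponents $e_1 \ge e_2 \ge \cdots \ge e_M \ge 1$, since swapping a larger exponent onto a smaller prime strictly decreases the product. As each $e_i + 1 \ge 2$, we obtain $2^M \le \prod_i (e_i+1) = k$, so the number of distinct prime factors satisfies $M \le \lfloor \log_2 k \rfloor =: t$.

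The heart of the argument is then to show $n_k \gg \prod_{i=1}^{t} p_i$, i.e.\ that no admissible configuration beats the primorial benchmark by more than a constant factor. The strategy I would use is a block comparison: writing $s_i := \log_2(e_i+1)$ so that $\sum_i s_i = \log_2 k \ge t$, I would match the prime power $p_i^{e_i}$ to the block of $\lceil s_i \rceil$ consecutive benchmark primes occupying the positions just past $\sum_{\ell < i} s_\ell$, and show that $p_i^{e_i}$ dominates the product over its block up to a uniform constant. The inputs are the identity $e_i = 2^{s_i} - 1$ (a prime carrying $s_i$ bits of $k$ is raised to an exponent exponential in $s_i$), the bound $p_i \ge 5$, and the Chebyshev/prime-number-theorem-in-progressions estimate $\sum_{i \le m} \log p_i \asymp m \log m$, which lets me compare $e_i \log p_i$ against $\sum_{j \in \text{block}} \log p_j$ and conclude $\log n_k = \sum_i e_i \log p_i \ge \sum_{i=1}^{t} \log p_i - O(1)$; exponentiating then yields the claim.

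The main obstacle is controlling, uniformly in $k$, the trade-off between the \emph{few primes with large exponents} and \emph{many primes with exponent one} regimes: concentrating the factorization of $k$ onto a handful of small primes shrinks the number of distinct prime factors but inflates the exponents, and one must verify that neither extreme undercuts the primorial by more than an absolute constant. Making the block-matching inequality $p_i^{e_i} \gg \prod_{j \in \text{block}} p_j$ hold simultaneously for every block, and handling arbitrary $k$ whose factorization need not be a power of $2$, is the delicate step; I expect the cleanest route is to bound $\log n_k$ from below by $\sum_{i=1}^{t} \log p_i$ via Abel summation against the non-increasing exponent sequence, rather than through an exact term-by-term matching.
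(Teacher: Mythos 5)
Your upper bound is exactly the paper's argument, and your reduction for the lower bound (discard the power of $2$ and the primes $\equiv 3 \pmod 4$, then push the factorization onto the smallest primes $\equiv 1 \pmod 4$ with non-increasing exponents, giving $M \le \lfloor \log_2 k\rfloor$ distinct primes) also matches the paper's opening moves. But the step you defer to the end --- the block inequality $p_i^{e_i} \gg \prod_{j \in \mathrm{block}} p_j$ with a uniform constant, and hence $\log n_k \ge \sum_{i=1}^{t} \log p_i - O(1)$ --- is not merely delicate: it is false, and no Abel-summation rearrangement can recover an additive $O(1)$ error in the logarithm. Concretely, for $k = 2^t$ compare the primorial configuration $p_1 \cdots p_t$ with
\begin{equation}
d \ = \ p_1^{2^{j}-1}\, p_2 \cdots p_{t-j+1}, \qquad r_2(d) \ = \ 4 \cdot 2^{j} \cdot 2^{t-j} \ = \ 4k,
\end{equation}
whose ratio to the primorial is $p_1^{2^{j}-2}\big/\left(p_{t-j+2}\cdots p_t\right) \approx 5^{2^j}/(t\log t)^{j-1}$. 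Choosing $2^{j} \asymp \log t$ sends this ratio to $0$; already at $t=5$ one has $5^3\cdot 13\cdot 17\cdot 29 = 801125 < 1185665 = 5\cdot 13\cdot 17\cdot 29\cdot 37$. So concentrating a modest exponent on $p_1$ beats the primorial by an unbounded factor of size $e^{(1+o(1))\log t \,\log\log t}$, your matched prime power does not dominate its block uniformly, and consequently $n_k \big/ \prod_{i \le \lfloor \log_2 k\rfloor} p_i$ is not bounded below by a positive constant along $k = 2^t$.

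What is salvageable --- and all that the paper actually uses downstream, in \eqref{estimate} and the bound on $n_k$ that follows it --- is the weaker statement $\log n_k \ \ge \ (1-o(1)) \sum_{i \le t} \log p_i$, since the correction $O(\log t\, \log\log t)$ is negligible against $\sum_{i\le t}\log p_i \asymp t \log t$; so you should read the $\ll$ in \eqref{boundingnk} as an asymptotic equality of logarithms rather than a bound up to multiplicative constant, and aim your summation argument at that target. For fairness of comparison: the paper's own proof of the lower bound is a heuristic optimization (it writes $d \approx p_t^{t g_t}$, asserts without proof that the minimum occurs when all exponents equal $1$, and its separate claim that $n_{2^m} = p_1\cdots p_m$ exactly fails by the same counterexample above), so your plan is more explicit than the paper's about where the real difficulty sits --- but as written, neither your $-O(1)$ target nor any uniform term-by-term block domination can be the route to it.
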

\begin{proof}
Firstly, as $r_2(5^{k-1})= 4(k-1+1)=4k$, we see immediately that $n_k \leq 5^{k-1}$. We proceed to show the lower bound.

Write $k=q_1^{a_1}\cdots q_{m}^{a_{m}}$, where $q_1,\ldots , q_{m}$ are $m$ distinct primes. Note that 
\begin{equation}
\left\lfloor\log_2(k) \right\rfloor  \ \leq \  a_1 + \cdots + a_m,
\end{equation} with equality if and only if $k=2^l$ for some $l \in \mathbb{N}$. 

Suppose $d$ is such that $r_2(d)=4k$. We wish to find the  minimum possible value for $d$.

We may assume that $d$ is not divisible by any primes that are congruent to 2 or 3 mod 4, as this would strictly increase its size without affecting the value of $r_2(d)$, as per Theorem \ref{r2n}. Hence, we may write $d=s_1 ^{g_1}\cdots s_t ^{g_t}$, for primes $s_i\equiv 1\pmod {4}$. Assuming without loss of generality that $g_1 \geq \cdots\geq g_t$, we see that the value of $d$ can be strictly decreased without affecting $r_2\left( d \right) =4\left( g_1+1 \right) \cdots \left( g_{t}+1 \right) $ by replacing $s_1=p_1,\ s_2=p_2,\ldots s_{t}=p_{t}$, as this will assign the highest exponents to the lowest possible prime values which are $1 \pmod {4}$. Hence we see $d$ is of the form 
\begin{equation}
d \ = \ p_1^{g_1}\cdots p_t ^{g_t} \ \approx \ p_t^{tg_t} \ \approx \ (t\log t)^{tg_t}.
\end{equation}
As $(g_1+1)\cdots (g_t+1)=k$, where $g_1\leq \cdots \leq g_t$, we see that $g_t^t \gg k$. Hence we may take $t\approx \log_{g_t}(k)$, and see that the quantity is maximized with large $t$ and small $g_t$. As the minimal possible value of $g_t$ is 1, which gives us $t\approx \log_2(k)$, we may take the asymptotic lower bound 
\begin{equation}\prod_{i=1}^{\log_2(k)} p_i.\end{equation}
\end{proof}

We wish to find a more explicit lower bound for $n_{k}$, which by Lemma \ref{enserio} is equivalent to estimating the product of the first $t$ primes $p_1<\cdots <p_{t}$ which are  congruent to $1\pmod{4}$. While this quantity is not well-studied, we may adapt existing work on bounding the product of the first $t$ primes $q_1,\ldots , q_{t}$ of any class $\pmod{4}$, denoted $q_{t}\#$.

The best general estimate is
\begin{equation}
	q_{t}\#=\prod_{i=1}^{t}q_{i}=e^{\left( 1+{o}(1) \right) t \log t}	
.\end{equation}

We can then approximate our quantity as 
\begin{equation}\label{estimate}
	\prod_{i=1}^{t}p_{i} \ = \ \left( \prod_{i=1}^{2t}q_{i} \right) ^{1/2} \ = \  e^{\frac{1}{2}\left( 1+o(1) \right)2t\log 2t } \ = \ e^{\left( 1+o(1) \right)t\log 2t }
.\end{equation}
Given the equal spread of the primes in any arithmetic progression, we know (\ref{estimate}) must converge towards the real quantity. 

Using this quantity, we have a general lower bound for $n_{k}$ using $t=\log_2(2k)$, namely 
\begin{equation}
	n_{k} \ \gg \ e^{\frac{1}{2}\left( 1+o(1) \right) \log_2(2k)\log \log_2(2k)} 
	\ =  \ (2k)^{\frac{1}{2}(1+o(1))\log_2(\log_2(2k))}
.\end{equation}
Now, given that a distance $\sqrt{d}$ is on the $k$-th curve of the distance distribution, we can maximize each summand in Lemma \ref{distancecount} to find the upper bound
\begin{equation} \label{eq:upperbound}
	L_{\sqrt{d}} \ \leq \ 2k N \left( N-\sqrt{d} \right) 
.\end{equation}
As (\ref{eq:upperbound}) assumes that all $k$ pairs of integers $(a,b)$ with $a^2+b ^2=d$ are identically $(\sqrt{d},0)$, we know that for $k>1$, (\ref{eq:upperbound}) is indeed a strict upper bound for this quantity.

Putting these pieces together, we can determine
\begin{equation}\label{upperbound}
	L_{\sqrt{n_{k}}} \ \ll \ 2k N\left( N-   (2k)^{\frac{1}{4}(1+o(1))\log_2(\log_2(2k))} \right) 
.\end{equation}
In particular, for large $N$, maximizing this quantity in terms of $k$  gives us a strict upper bound for the most common distance on the $N\times N$ lattice, which we denote $F_{N}$. Thus, we reach an asymptotic bound for the most common distance in the lattice, which proves useful in the following section.

\section{Error Estimates}\label{errorsection}
After studying the distance distribution for the lattice, the logical next step is to study behavior of distance distributions for subsets of the lattice. Although we know that the lattice is a near-optimal set, as previously discussed, the behavior of the distance distributions of its subsets can vary widely.

Thus, we examine how different and similar the distance distributions for subsets of the lattice can be to that of the lattice, an analogous version of the Erd\H{o}s distinct distances problem on subsets of the lattice.   

We now define our method of calculating the difference between the integer lattice's distance distribution and that of one of its subsets. Recall that  $L_{\sqrt{d}}$ is the frequency of a distance $\sqrt{d}$ in the lattice and $S_{\sqrt{d}}$ is its frequency in a particular subset $S$. We note that the $N \times N$ lattice has $N^2(N^2-1)/2 \approx N^4/2$ total distances and a subset with $p$ points has $p(p-1)/2 \approx p^2/2$ total distances. Thus we scale up each $S_{\sqrt{d}}$ by $N^4/p^2$ to have a distance distribution with about the same total number of frequencies as that of the lattice. Then, we sum $\left|(N^4/p^2)S_{\sqrt{d}}-L_{\sqrt{d}}\right|$ over all $d \in \mathcal{D}_N$.

More explicitly, we have the formula in the following definition. 

\begin{definition}
Let $\varepsilon$ be the error between the distance distribution of the integer lattice and one of its subsets. Then, \begin{equation}
\varepsilon \ = \ \frac{1}{|\mathcal{D}_N |} \sum_{d\in\mathcal{D}_N}\left|\frac{N^4}{p^2}S_{\sqrt{d}}-L_{\sqrt{d}}\right|
. \end{equation}
\end{definition}

In many of our calculations, instead of working with the actual distances, we work with individual pairs $(a,b)$ for which $\sqrt{a^2+b^2} \in \mathcal{D}_N$. This gives exact values of the contribution to the error for distances on the first and second curves; in all other cases, this is a simplification. Thus we introduce some new notation.  
\begin{definition}
Let $L_{a,b}$ denote the number of unordered pairs $(a_1, b_1)$, $(a_2, b_2) \in \mathcal{L}_N$ such that $|a_1-a_2|=a$ and $|b_1-b_2|=b$ or $|a_1-a_2|=b$ and $|b_1-b_2|=a$.

\end{definition}

\begin{definition}
Let $S_{a,b}$ denote the number of unordered pairs $(a_1, b_1)$, $(a_2, b_2)$ in a given subset $S\subset \mathcal{L}_N$ such that $|a_1-a_2|=a$ and $|b_1-b_2|=b$ or $|a_1-a_2|=b$ and $|b_1-b_2|=a$.
\end{definition}
\begin{definition}
Let $\varepsilon_{a,b}=|(N^4/p^2)S_{a,b}-L_{a,b}|$.
\end{definition}
 It may be shown that counting repeat distances as distinct either strictly increases error or has no effect on it. If $\sqrt{a^2+b^2}=\sqrt{c^2+d^2}$ for $\{a,b\}\neq\{c,d\}$, we then see that the total contribution to error for this distance is \begin{equation}\left|\frac{N^2}{p^2}S_{a,b} - L_{a,b} + \frac{N^2}{p^2}S_{c,d} - L_{c,d}\right|, \end{equation} whereas counting them as distinct gives a total contribution to error \begin{equation}\left|\frac{N^2}{p^2}S_{a,b} - L_{a,b}\right| + \left|\frac{N^2}{p^2}S_{c,d} - L_{c,d}\right|.\end{equation} Counting each pair as distinct thus gives an upper bound for total error. Furthermore, these two quantities are identical if and only if $\left(N^2/p^2\right)S_{a,b} - L_{a,b}$ and $\left(N^2/p^2\right)S_{c,d} - L_{c,d}$ have the same sign, which we suspect is true of most subsets which maximize error.

Recall from the previous calculations we made on the frequency of distances on the lattice that when $b=0$ or $a=b$,   $L_{a,b}=2(N-a)(N-b)$. Otherwise, for $b>0$ and $a>b$, we have $L_{a,b}=4(N-a)(N-b)$.
For later error calculations, we need the average values of $2(N-a)(N-b)$  and $4(N-a)(N-b)$ and the fraction of $L_{a,b}$ that are of the form $2(N-a)(N-b)$ and the fraction of  $L_{a,b}$ that are of the form $4(N-a)(N-b)$. We provide these values in the following two lemmas, which follow immediately by induction. 

\begin{lemma} \label{lem:avg_val}
The average value of $2(N-a)(N-b)$ over $a=b$ or $b=0$ is \begin{equation}\left(2N\right)^{-1}\left(\sum_{a=1}^N 2(N-a)^2+ \sum_{a=1}^N 2N(N-a)\right)=\frac{N(5N-1)}{6}\end{equation}  and the average value of $4(N-a)(N-b)$ over $b>0$ and $a>b$ is \begin{equation}\left(\frac{1}{2}N(N-1)\right)^{-1}\left(\sum_{b=1}^{N-1} \sum_{a=b+1}^N 4(N-a)(N-b)\right)= \frac{N(3N-1)}{3}.\end{equation}
\end{lemma}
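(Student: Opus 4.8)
The plan is to verify the two closed-form averages in Lemma~\ref{lem:avg_val} directly by evaluating the finite sums, since both quantities are defined explicitly as sums divided by a normalizing count. Although the statement advertises an inductive proof, the cleanest route is to reduce each average to standard power-sum identities and simplify; induction would merely reprove these identities. I would handle the two cases separately, as they have different normalizations: the first averages over the $2N$ pairs $(a,b)$ with either $a=b$ (giving $N$ such pairs with $a$ ranging over $1,\dots,N$) or $b=0$ (giving another $N$ such pairs), while the second averages over the $\tfrac12 N(N-1)$ pairs with $b>0$ and $a>b$.

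For the first average, I would split the numerator as written into $\sum_{a=1}^{N} 2(N-a)^2$ (the diagonal $a=b$ contribution) and $\sum_{a=1}^{N} 2N(N-a)$ (the axis $b=0$ contribution). Re-indexing by $j=N-a$ turns these into $2\sum_{j=0}^{N-1} j^2$ and $2N\sum_{j=0}^{N-1} j$, which evaluate via the standard formulas $\sum_{j=0}^{N-1} j^2 = \tfrac{(N-1)N(2N-1)}{6}$ and $\sum_{j=0}^{N-1} j = \tfrac{(N-1)N}{2}$. Adding these, dividing by $2N$, and collecting terms should yield $\tfrac{N(5N-1)}{6}$; I would double-check the arithmetic against a small value such as $N=2$ or $N=3$ to confirm the constant factors.

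For the second average, the numerator $\sum_{b=1}^{N-1}\sum_{a=b+1}^{N} 4(N-a)(N-b)$ is the more delicate computation. I would factor out the $(N-b)$ and first evaluate the inner sum $\sum_{a=b+1}^{N}(N-a)$, which by the substitution $i=N-a$ becomes $\sum_{i=0}^{N-b-1} i = \tfrac{(N-b-1)(N-b)}{2}$. Substituting back leaves a single sum $4\sum_{b=1}^{N-1}(N-b)\cdot\tfrac{(N-b)(N-b-1)}{2}$; setting $m=N-b$ this is $2\sum_{m=1}^{N-1} m^2(m-1) = 2\big(\sum_{m=1}^{N-1} m^3 - \sum_{m=1}^{N-1} m^2\big)$, again reducible to standard power sums. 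Dividing by $\tfrac12 N(N-1)$ and simplifying should produce $\tfrac{N(3N-1)}{3}$.

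The main obstacle is purely bookkeeping: keeping the index shifts consistent and the combinatorial normalizations correct, especially in the second case where a cubic power sum appears and an off-by-one error in the summation bounds would corrupt the final constant. I expect no conceptual difficulty, so the strategy is to carry out each reduction carefully and then sanity-check both final expressions at a small $N$ to catch any slip in the algebra.
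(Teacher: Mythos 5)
Your method---direct evaluation of both numerators via standard power-sum identities---is sound, and the reductions you describe are correct: the first numerator comes out to $\frac{(N-1)N(5N-1)}{3}$ and the second to $\frac{(N-1)N(N-2)(3N-1)}{6}$. But the final steps you assert do not go through. Dividing the first numerator by $2N$ gives $\frac{(N-1)(5N-1)}{6}$, \emph{not} $\frac{N(5N-1)}{6}$; dividing the second by $\frac{1}{2}N(N-1)$ gives $\frac{(N-2)(3N-1)}{3}$, \emph{not} $\frac{N(3N-1)}{3}$. You proposed a sanity check at small $N$ but never carried it out; doing so exposes the problem immediately: at $N=2$ the first displayed left-hand side is $(2+4)/4=\frac{3}{2}$ while the right-hand side is $3$, and at $N=3$ the second displayed left-hand side is $8/3$ while the right-hand side is $8$. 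So the proposal breaks precisely at the step ``dividing \ldots{} should yield,'' which is asserted rather than computed.

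The mismatch is traceable to the normalizing counts, which as printed do not count the pairs that actually occur on the lattice. The pairs $(a,b)$ with $a=b$ or $b=0$ contributing a distance to $\mathcal{L}_N$ satisfy $1\leq a\leq N-1$, so there are $2(N-1)$ of them, not $2N$; the pairs with $0<b<a\leq N-1$ number $\frac{1}{2}(N-1)(N-2)$, not $\frac{1}{2}N(N-1)$. Since the terms with $a=N$ vanish, the numerators are unchanged, and with the corrected counts your computations deliver exactly the claimed right-hand sides: $\frac{(N-1)N(5N-1)/3}{2(N-1)}=\frac{N(5N-1)}{6}$ and $\frac{(N-1)N(N-2)(3N-1)/6}{(N-1)(N-2)/2}=\frac{N(3N-1)}{3}$. (These corrected counts are also the ones consistent with Lemma \ref{lem:frac}: $2(N-1)$ out of $\frac{(N-1)(N+2)}{2}$ total pairs gives the stated fraction $\frac{4}{N+2}$.) The paper offers no computation at all---it asserts the lemma ``follows immediately by induction''---so your direct evaluation is the right approach; but a correct write-up must fix, or at least flag, the normalization in the statement rather than claim the displayed equalities hold as written.
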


\begin{lemma}\label{lem:frac}
 The fraction of $L_{a,b}$ that are of the form $2(N-a)(N-b)$ is $4 / (N+2)$ and the fraction of  $L_{a,b}$ that are of the form $4(N-a)(N-b)$ is $(N-2) / (N+2)$.
\end{lemma}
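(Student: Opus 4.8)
The plan is to prove the lemma by directly enumerating the distinct displacement types on $\mathcal{L}_N$, where by a type I mean an unordered pair $\{a,b\}$ with $a \ge b \ge 0$ recording the horizontal and vertical separations that realize a distance $\sqrt{a^2+b^2} \in \mathcal{D}_N$. The two ``fractions of $L_{a,b}$'' in the statement are fractions taken over this finite index set of pairs, so the entire argument reduces to counting how many pairs fall into each of the two forms $2(N-a)(N-b)$ and $4(N-a)(N-b)$ recalled just before the statement.

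First I would pin down the correct range of the index pairs. Since $L_{a,b}$ is nonzero exactly when the corresponding segment fits inside the lattice, the relevant pairs are those with $1 \le a \le N-1$ and $0 \le b \le a$; in particular the degenerate value $a = N$, which contributes $L_{a,b}=0$, must be excluded. Getting this boundary right is the one delicate point, since it is precisely what distinguishes the claimed denominator $N+2$ from the value $N+3$ one would obtain by carelessly including the vanishing terms.

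Next I would split this index set according to the dichotomy recalled before the statement. The pairs giving $L_{a,b}=2(N-a)(N-b)$ are those with $b=0$ or $a=b$; these two families are disjoint and each contributes $a \in \{1,\dots,N-1\}$, for a total of $2(N-1)$ pairs. The pairs giving $L_{a,b}=4(N-a)(N-b)$ are those with $0 < b < a \le N-1$, of which there are $\binom{N-1}{2} = (N-1)(N-2)/2$. Summing gives a total of $\tfrac{1}{2}(N-1)(N+2)$ index pairs, and dividing each of the two counts by this total immediately yields $4/(N+2)$ and $(N-2)/(N+2)$, as claimed.

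I do not anticipate a genuine obstacle, since the argument is pure bookkeeping; the only care needed is the off-by-one at the boundary $a=N$ flagged above. If one prefers the ``by induction'' phrasing suggested in the text, the same counts follow by induction on $N$: in passing from $\mathcal{L}_{N-1}$ to $\mathcal{L}_N$ the newly available types are exactly those with $a=N-1$, namely one new pair in each of the $b=0$ and $a=b$ families and $N-2$ new pairs with $0 < b < a$, which advances the two counts from $2(N-2)$ and $\binom{N-2}{2}$ to $2(N-1)$ and $\binom{N-1}{2}$; the stated fractions then follow at each stage by division.
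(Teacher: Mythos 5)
Your proof is correct. The paper itself gives no written argument for this lemma---it only asserts that this and Lemma~\ref{lem:avg_val} ``follow immediately by induction''---so your explicit enumeration is precisely the bookkeeping that remark leaves to the reader: the index set is the set of pairs $(a,b)$ with $0 \le b \le a \le N-1$, $(a,b) \neq (0,0)$, of cardinality $(N-1)(N+2)/2$; the family $\{b=0\} \cup \{a=b\}$ contributes $2(N-1)$ pairs and the family $\{0 < b < a\}$ contributes $\binom{N-1}{2}$, giving the stated fractions $4/(N+2)$ and $(N-2)/(N+2)$. Your count is also consistent with the paper's own usage elsewhere: in Example~\ref{ex:p=4} the fraction assigned to the two pairs $(N-1,N-1)$ and $(N-1,0)$ is $4/(N^2+N-2)$, i.e., $2$ pairs out of a total of $(N-1)(N+2)/2$, which confirms your denominator. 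The boundary subtlety you flag is genuine and worth emphasizing: the paper's Lemma~\ref{lem:avg_val} actually \emph{does} include the vanishing $a=N$ terms in its averages (its sums run to $a=N$ and it divides by $2N$, respectively $N(N-1)/2$), whereas the fractions in the present lemma only come out to $4/(N+2)$ and $(N-2)/(N+2)$ if those degenerate pairs are excluded; including them would give $4/(N+3)$ and $(N-1)/(N+3)$, exactly as you observe. So the two lemmas implicitly use slightly different index conventions, and you have identified the one that makes the present statement true. Your closing induction sketch is also correct and is presumably the argument the paper had in mind.
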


Similarly to the original Erd\H{o}s  distance problem, we are interested in finding upper and lower bounds for the behavior we are studying. For the upper bounds, we find patterns of configurations that maximize the error. We  then calculate the error for these specific configurations. For the lower bounds, we construct a theoretical optimal distance distributions and calculate a lower bound on its error.

\section{Bounds}
We begin this section with explicit calculations of upper bounds on error for configurations of $p$ points.
A configuration of $p$ points that maximizes error needs to have as different a distance distribution as possible from the original full lattice when scaled up. Thus, the ratio of each distance's frequency to the total number of distances, including repeated distances, must be as different as possible from that of the full lattice. Specifically, we want to have a subset that has many distances that were infrequent in the full lattice and minimal number of distances that were very frequent in the full lattice.

For certain values of $p,$ by brute-force calculations we know the configuration that maximizes error. See Figures \ref{fig:p=4}, \ref{fig:p=5}, \ref{fig:p=9}, \ref{fig:p=4(N-1)}, \ref{fig:p=4(N-1)+4(N-3)}, and \ref{fig:p=ceiling[N.2]}.

\begin{figure}[!ht]
     \centering
     \begin{floatrow}
       \ffigbox[\FBwidth]{\caption{$p=4$.}\label{fig:p=4}}{%
         \includegraphics[scale=.2]{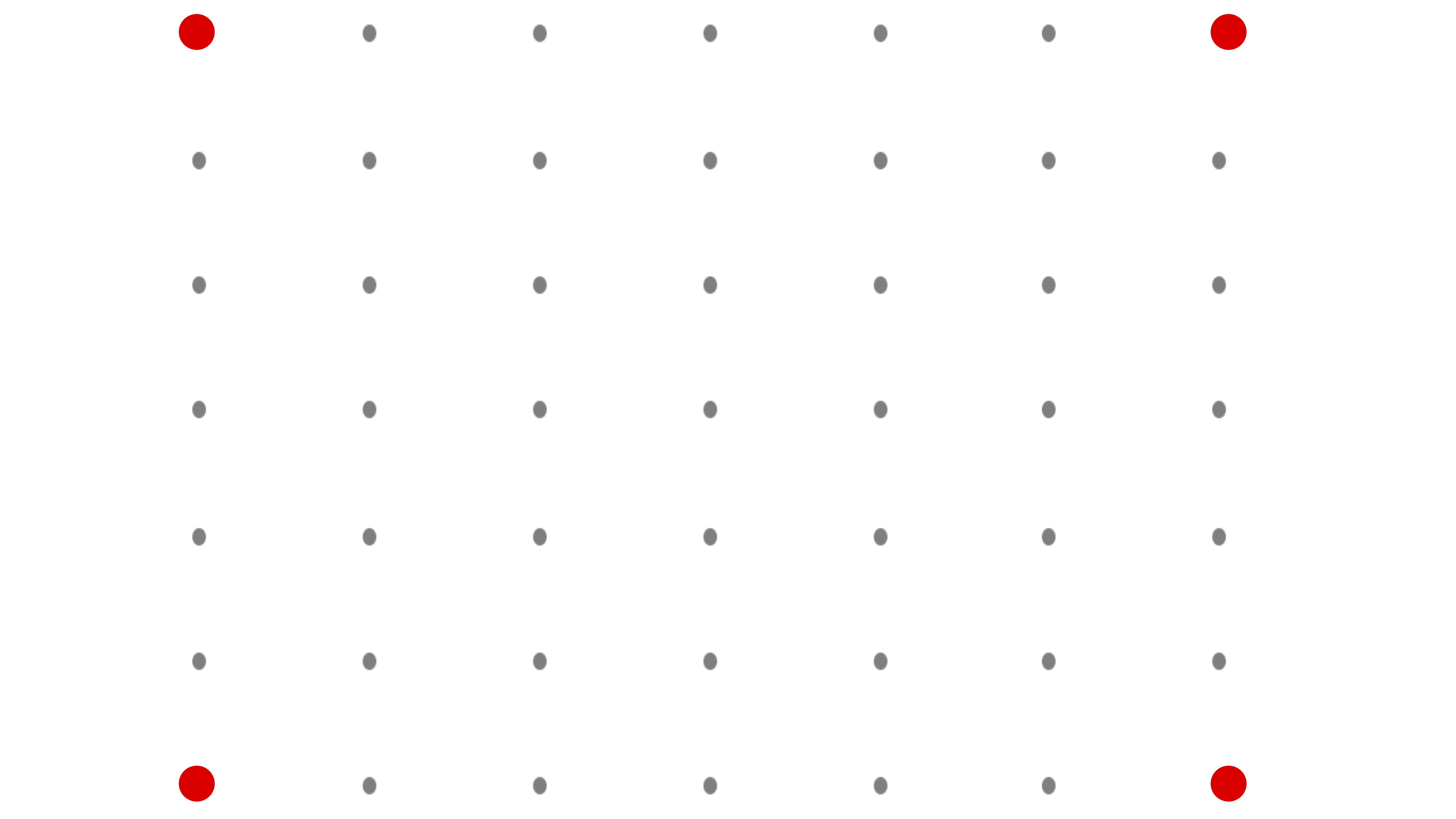}   
       }
       \ffigbox[\FBwidth]{\caption{$p=5$.}\label{fig:p=5}}{%
         \includegraphics[scale=.2]{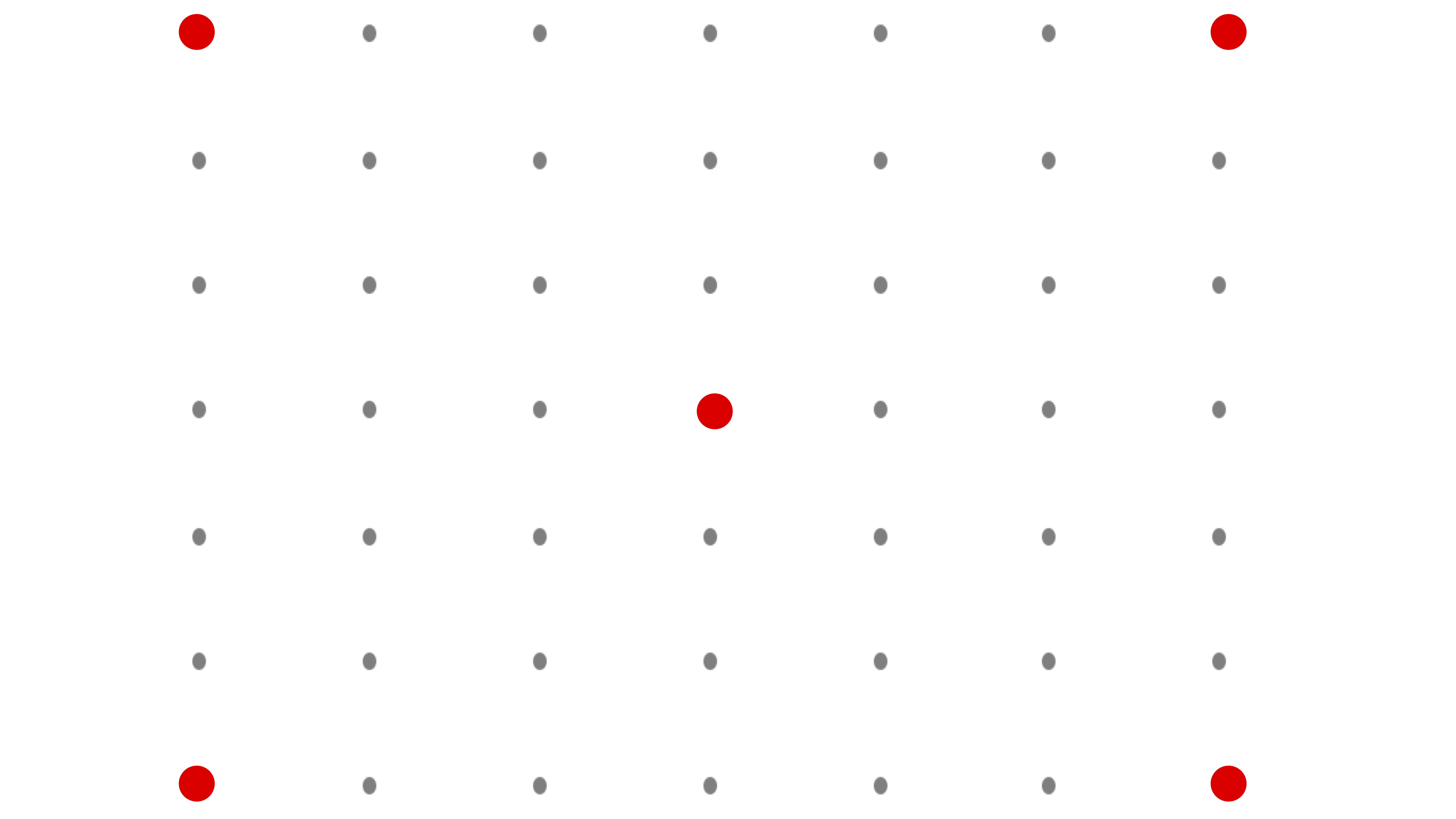}   
       }
     \end{floatrow}
  
     \centering
     \begin{floatrow}
       \ffigbox[\FBwidth]{\caption{$p=9$.}\label{fig:p=9}}{%
         \includegraphics[scale=.2]{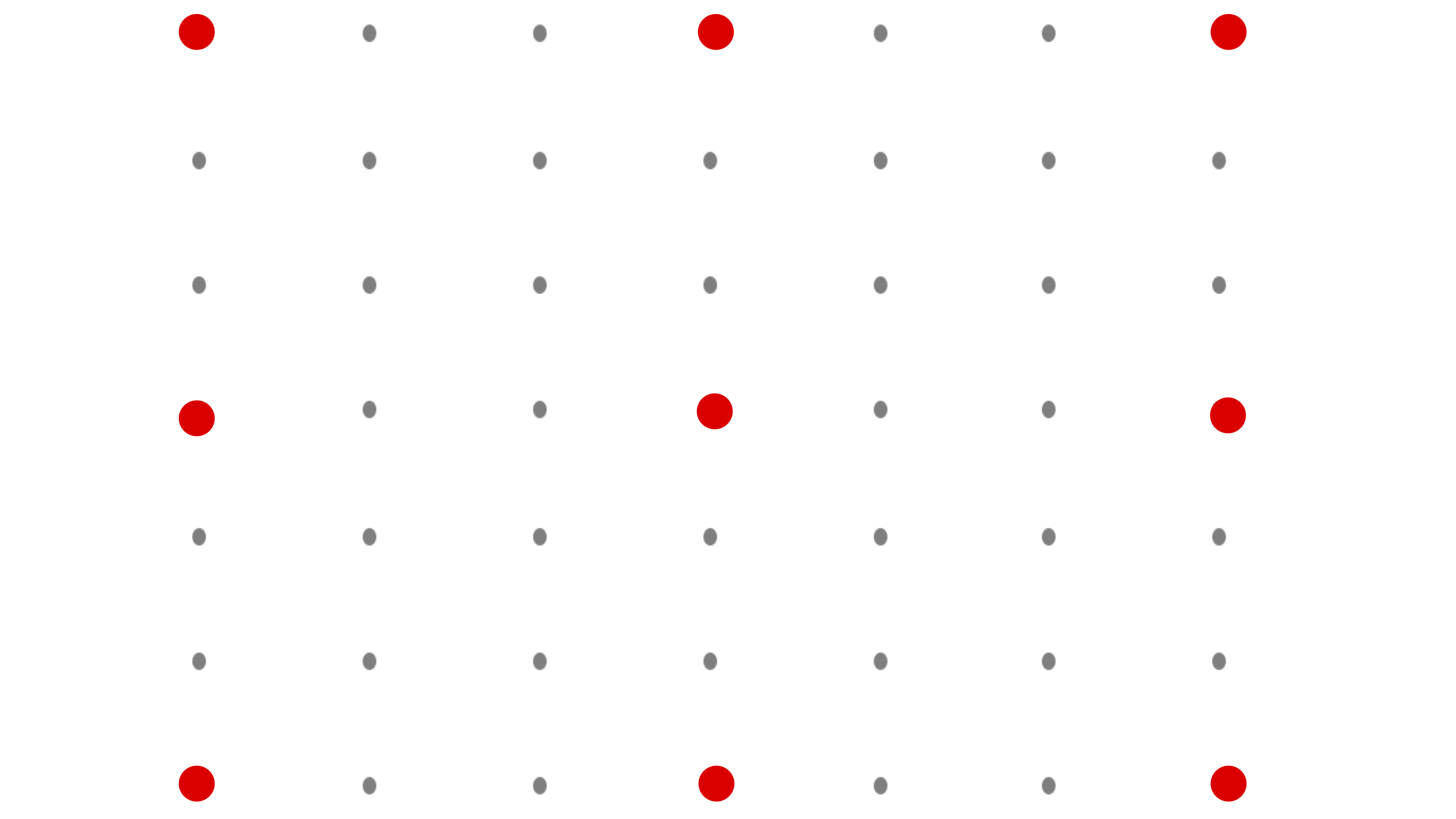}   
       }
       \ffigbox[\FBwidth]{\caption{$p=4(N-1)$.}\label{fig:p=4(N-1)}}{%
         \includegraphics[scale=.2]{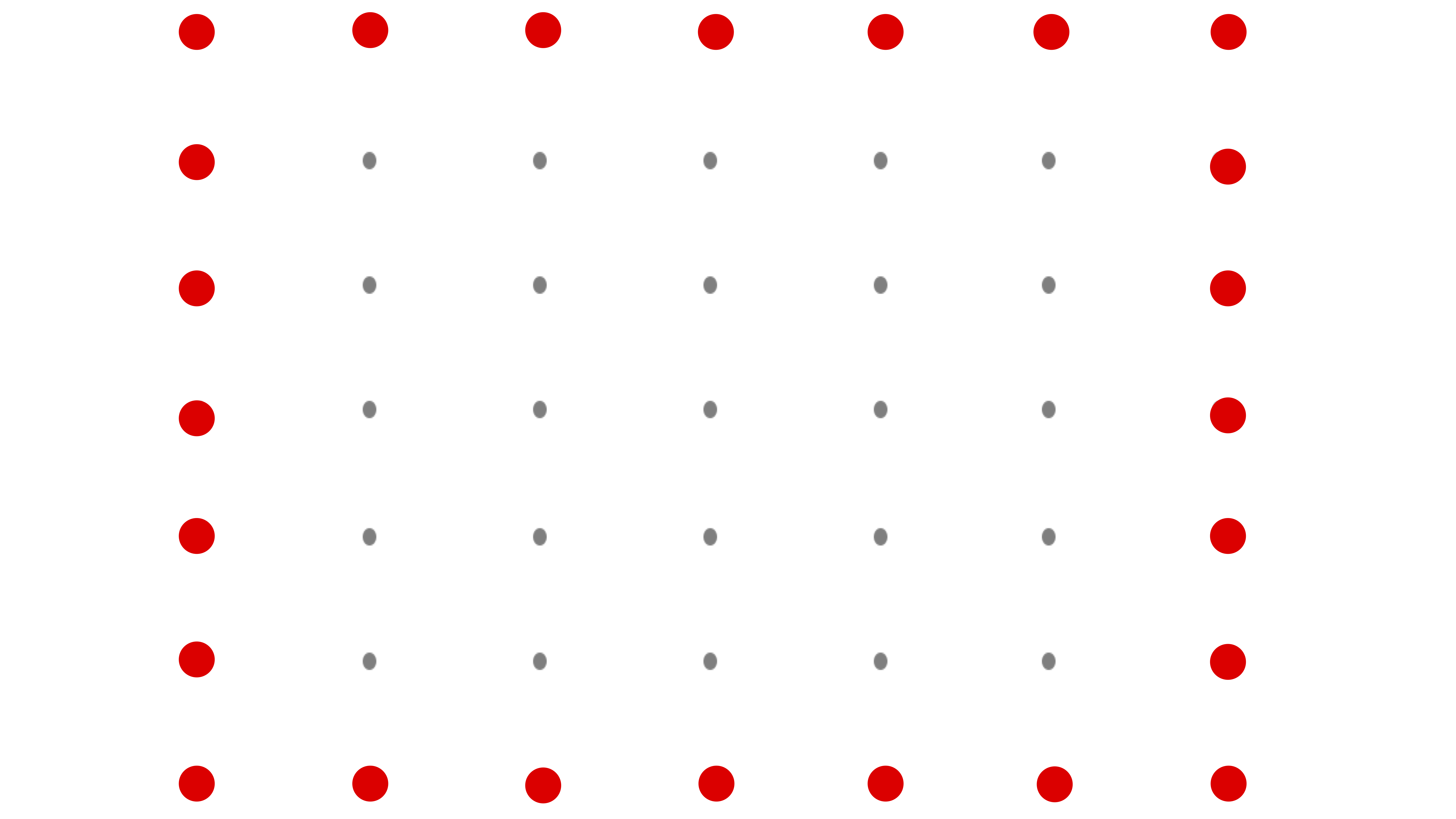}   
       }
     \end{floatrow}

     \centering
     \begin{floatrow}
       \ffigbox[\FBwidth]{\caption{$p=4(N-1)+4(N-3)$.}\label{fig:p=4(N-1)+4(N-3)}}{%
         \includegraphics[scale=.2]{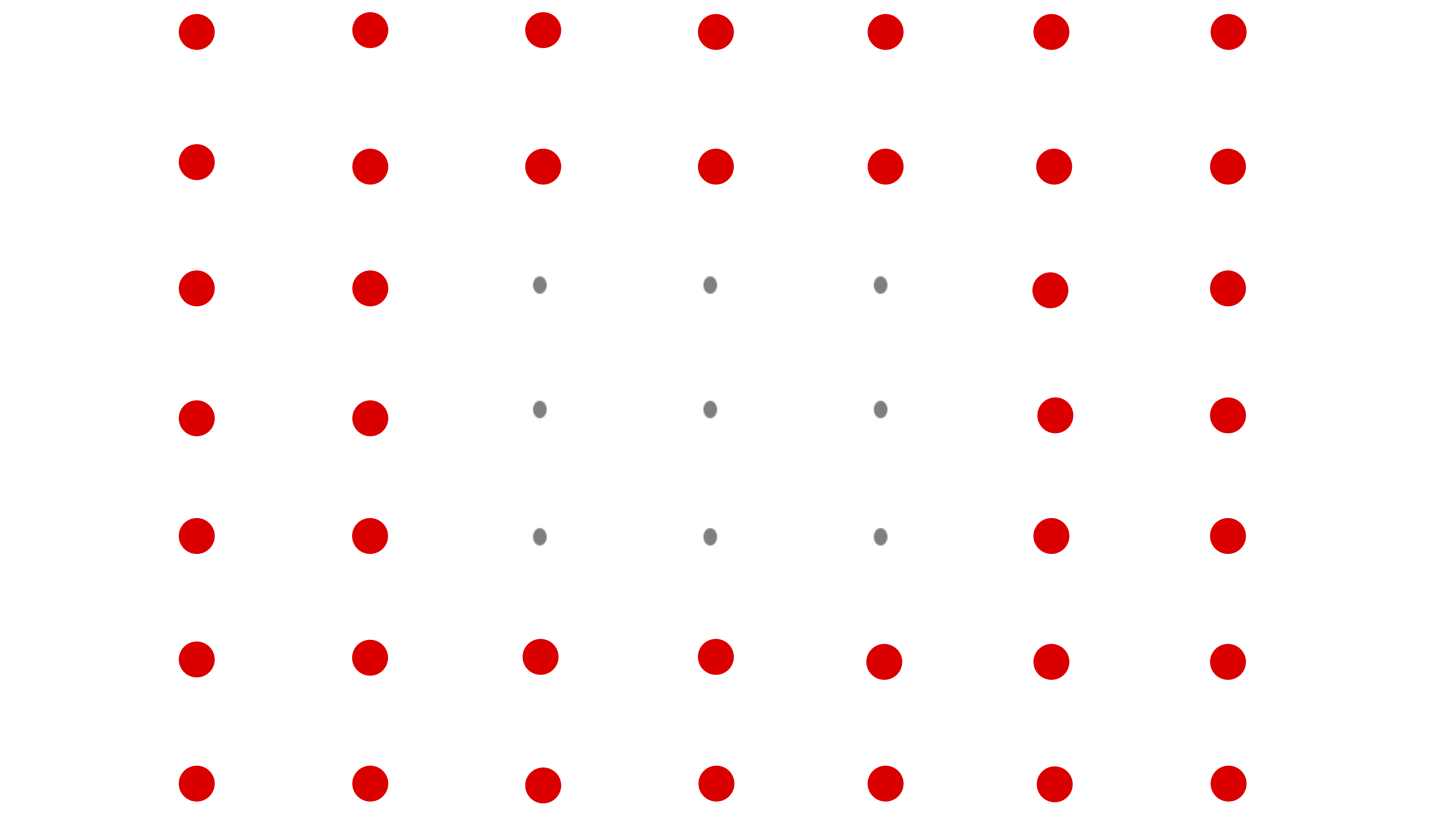}   
       }
       \ffigbox[\FBwidth]{\caption{$p=\left \lceil\frac{N^2}{2} \right \rceil$.}\label{fig:p=ceiling[N.2]}}{%
         \includegraphics[scale=.2]{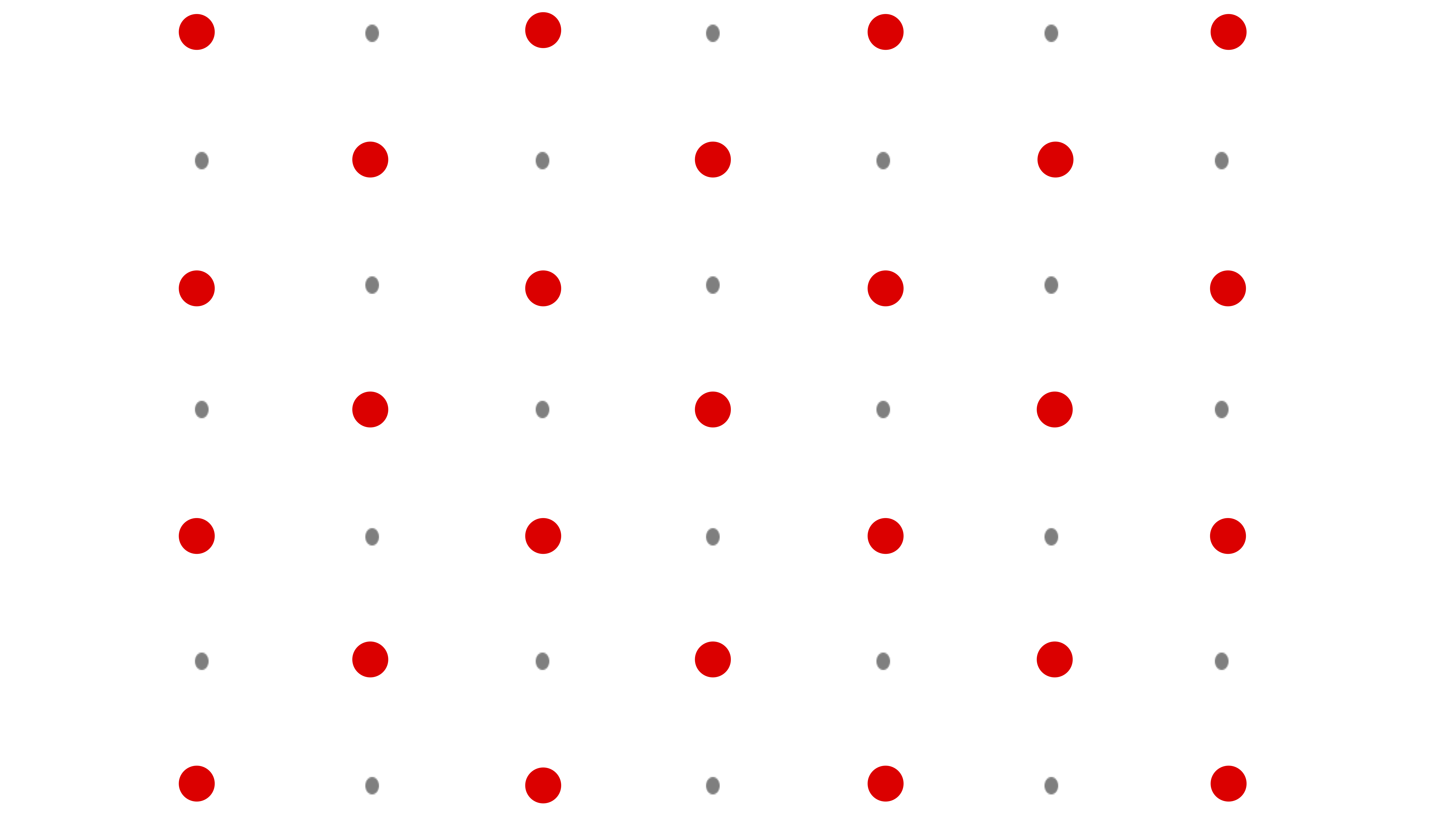}  
       }
     \end{floatrow}
   \end{figure}

  


We briefly describe these subsets. For $p=4,$ the maximal error subset is all four corners of the lattice. To transition to $p=5$, the middle point is added in. For $p=9$, the maximal subset is a $3 \times 3$ lattice stretched to the size of the $N \times N$ lattice. To transition from $p=5$ to $p=9$, the points that are in $p=9$, but not in $p=5$ are added in one by one. We then know for $p=4(N-1)$, the maximal error subset is the perimeter of the lattice, with no other points. For $p=\sum_{i=1}^m 4(N-(2i-1))$, the maximal error subset is the filled-in perimeter with a depth of $i$ points. For example, Figure \ref{fig:p=4(N-1)+4(N-3)} is a filled-in perimeter with a depth of 2 points. To transition from $p=\sum_{i=1}^m 4(N-(2i-1))$ to $p=\sum_{i=1}^{m+1} 4(N-(2i-1))$, the points only present in the latter configuration are filled-in one by one. The final maximal error configuration we found is for $p=\left \lceil N^2/2 \right \rceil$, for which every other point is filled-in to make a configuration we refer to as a checkerboard lattice. One can note that depending on the value of $N$, $\left \lceil N^2/2 \right \rceil$ is less than $p=\sum_{i=1}^{m} 4(N-(2i-1))$ for different values of $m$. As a result, there is a transition between these two types of configurations and a transition back.

In the following examples, we calculate error estimates for some of these configurations. For these calculations, we use the simplification of working with $\sqrt{a^2+b^2}$ where $0 \leq b \leq N-1$ and $b \leq a \leq N-1$, excluding  $a=b=0$, instead of distinct distances.  
\begin{example} \label{ex:p=4}
We begin with  $p=4$, where the maximal error configuration is a point in each  corner of the lattice. We first note that the scaling constant is   $N^4/p^2=N^4/16$.
We also have just two distinct distances, \begin{equation}\sqrt{(N-1)^2+(N-1)^2} \ = \ \sqrt{2}(N-1),\end{equation} which has $L_{N-1,N-1}=2$ and $S_{N-1,N-1}=2$, and \begin{equation}\sqrt{(N-1)^2+0^2} \ = \ N-1,\end{equation} which has $L_{N-1,0}=2N$ and $S_{N-1,0}=4$. To find $\varepsilon_{N-1,N-1}$ and $\varepsilon_{N-1,0}$, we have to scale  $S_{N-1,N-1}$ and $S_{N-1,0}$ by $N^4/16$ and subtract $L_{N-1,N-1}$ and $L_{N-1,0}$, respectively. This gives us $\varepsilon_{N-1, N-1}=N^4/8-2$ and $\varepsilon_{N-1, 0}=N^4/4-2N.$ Thus the total error contribution from these two distances is $3N^4/8-2-2N$.

Both $L_{N-1,N-1}$ and $L_{N-1,0}$ are of the form $2(N-a)(N-b)$, so we have to update the average value of $2(N-a)(N-b)$ from Lemma \ref{lem:avg_val} and the fraction of the time  $L_{a,b}$ is of the form $2(N-a)(N-b)$ from Lemma \ref{lem:frac} to exclude $L_{N-1,N-1}$ and $L_{N-1,0}$. The new average is $(5N^2+4N+3)/6$ and the new fraction is $(4N-8)/(N^2+N-2)$. The fraction of $L_{a,b}$ that are $L_{N-1,N-1}$ and $L_{N-1,0}$ is $4/(N^2+N-2)$. We note that for $(a,b)$ not equal to $(N-1,N-1)$ or $ (N-1,0)$, $S_{a,b}=0$. Thus, the average error contribution for these distances is their average frequency in the lattice.

We then put everything together to get our error estimate.
\begin{align}
\varepsilon & \ \leq \ \frac{4}{N^2+N-2}\left(\frac{3N^4}{8}-2-2N\right)+\frac{4N-8}{N^2+N-2}\left(\frac{5N^2+4N+3}{6}\right)\nonumber\\
&\ \ \ \ +\frac{N-2}{N+2}\left(\frac{N(3N-1)}{3}\right)\nonumber \\
& \ = \ \frac{5N^2}{2}-\frac{5N}{2}-\frac{15}{2(N-1)}-\frac{16}{N+2}+\frac{13}{2}.
\end{align}

This error estimate is an overestimate of the error when $N$ is small because of the fact that we are looking at $\sqrt{a^2+b^2}$, instead of distinct distances. However, the only distances affected by  this way of estimating the error are the two distances present on the lattice, $\sqrt{2}(N-1)$ and $N-1$ . As $N \rightarrow \infty$, the fraction of the total distances that these distances represent goes to zero. Thus, this error estimate converges to the actual error.
\end{example}

\begin{example} \label{ex:p=5}
Similarly, we can calculate the error when $p=5$. Recall, for this value of $p$, that the subset configuration that maximizes error is the four corners and the middle point of the lattice. 

This configuration has 3 distinct distances: \begin{equation}\sqrt{(N-1)^2+0^2} \ = \ N-1,\end{equation} for which we have  $S_{N-1, 0}=4$  and $L_{N-1, 0}=2N$, \begin{equation}\sqrt{(N-1)^2+(N-1)^2} \ = \ \sqrt{2}(N-1),\end{equation} for which we have   $S_{N-1, N-1}=2$  and   $L_{N-1, N-1}=2$, and \begin{equation}\sqrt{((N-1)/2)^2+((N-1)/2)^2} \ = \ \sqrt{2}(N-1)/2,\end{equation} for which we have  $S_{(N-1)/2, 0}=4$ and $L_{(N-1)/2, 0}=N+1$.

To calculate $\varepsilon_{N-1, 0},$ $\varepsilon_{N-1, N-1}$, and $\varepsilon_{(N-1)/2, 0}$, we need to multiply $S_{a,b}$ by $N^4/25$ and subtract $L_{a,b}$. Thus, $\varepsilon_{N-1, 0}=4N^4/25-2n$, $\varepsilon_{N-1, N-1}=2N^4/25-2$ and $\varepsilon_{(N-1)/2, 0}=4N^4/25-(N+1)$. 

We know that $L_{N-1, 0},$ $L_{N-1, N-1}$, and $L_{(N-1)/2, 0}$ are of the form $2(N-a)(N-b),$ so we need to edit the average value of $2(N-a)(N-b)$ from Lemma \ref{lem:avg_val}
and the fraction of the time   $L_{a,b}$ is of the form $2(N-a)(N-b)$ from Lemma \ref{lem:frac} to no longer include the three distances listed above.
The new average value is \begin{equation}\frac{5N^3-6N^2-8N-9}{3(2N-5)}\end{equation} and  the fraction of the time  $L_{a,b}$ is of the form $2(N-a)(N-b)$ is \begin{equation}\frac{6}{N+2}-\frac{2}{N-1}.\end{equation} The fraction of the total distances that are the three distances listed above  is \begin{equation}\frac{2}{N-1}-\frac{2}{N+2}.\end{equation}
We then put this all together to calculate the error:
\begin{align}
\varepsilon & \ \leq \ \left(\frac{2}{N-1}-\frac{2}{N+2}\right) \left[\left(\frac{4N^4}{25}-2N\right)+\left(\frac{2N^4}{25}-2\right)+\left(\frac{4N^4}{25}-(N+1)\right)\right]\nonumber\\
&\ \ \ +\left(\frac{6}{N+2}-\frac{2}{N-1}\right)\left[\frac{5N^3-6N^2-8N-9}{3(2N-5)}\right] +\frac{N-2}{N+2}\left[\frac{N(3N-1)}{3}\right]\nonumber \\
& \ = \ \frac{17N^2}{5}-\frac{17N}{5}-\frac{6}{N-2}-\frac{56}{5(N-1)}-\frac{124}{5(N+2)}-\frac{31}{3(2N-5)}+\frac{113}{15}.\end{align}

Similarly to $p=4$, this error estimate overestimates  the error when $N$ is small because of the fact that we are looking at $\sqrt{a^2+b^2}$, instead of distinct distances $\sqrt{d}$. However, as $N \rightarrow \infty$, this error estimate converges to the actual error.
\end{example}

\begin{example}\label{ex:p=9}
We can then examine what happens when $p=9$. The 9 point configuration that maximizes error is a $3 \times 3$ lattice that has been stretched to the size of the $N \times N$ lattice.

We first note that $S_{a,b}>0$  if and only if $a,b\equiv 0 \pmod{(N-1)/2}.$ Thus, the fraction of $S_{a,b}$ such that $S_{a,b} \neq 0$ for $b\neq 0$ and $a >b$  is \begin{equation}\frac{4}{(N-1)^2}\end{equation} and the fraction of $S_{a,b}$ such that $S_{a,b}\neq 0$ for $b=0$ or $a=b$ is \begin{equation}\frac{2}{N-1}.\end{equation} The scaling constant is $N^4/81$. We estimate the error from above by assuming that if $S_{a,b}\neq 0$, then $S_{a,b}=L_{a,b}.$ This assumption does not increase the error estimate by an unreasonable amount because, for large enough $N$, the fraction of total distances that are represented in this configuration is very low. We can then use our previous averages of $L_{a,b}$ to calculate error:
\begin{align}
\varepsilon & \ \leq \ \frac{4}{N+2}\Bigg[\frac{2}{N-1}\left(
\frac{N^4}{81}\left(\frac{N(5N-1)}{6}\right)-\frac{N(5N-1)}{6}\right)\nonumber \\
&\ \ \  \ +\left(1-\frac{2}{N-1}\right)\left(\frac{N(5N-1)}{6}\right)  \Bigg] \nonumber \\
&\ \ \ \  +\frac{N-2}{N+2}\Bigg[\frac{4}{(N-1)^2}\left(\frac{N^4}{81}\left(\frac{N(3N-1)}{3}\right)-\frac{N(3N-1)}{3}\right)\nonumber\\
&\ \ \ \  +\left(1-\frac{4}{(N-1)^2}\right)\left(\frac{N(3N-1)}{3}\right)  \Bigg] \nonumber \\
& \ = \ \frac{32N^4}{243}-\frac{52N^3}{243}+\frac{4N^2}{9}-\frac{220N}{243}-\frac{23044}{2187(N-1)}-\frac{14000}{2187(N+2)}\nonumber \\
&\ \ \ \  -\frac{6200}{729(N-1)^2}+\frac{112}{27(N-1)^3}+\frac{32}{9(N-1)^4}+\frac{428}{243}.\end{align}
\end{example}

\begin{example}\label{ex:checkerboard}
Finally, we examine the error for the configuration for $p=\left\lceil N^2/2\right \rceil$. This configuration is the \textquotedblleft checkerboard lattice," a subset that is missing every other point from the full lattice and resembles a checkerboard, as seen in Figure \ref{fig:p=ceiling[N.2]}. 

To provide some intuition, the checkerboard lattice is a reasonable configuration for maximizing error because it very strongly prioritizes distances where $b=0$ or $a=b$. These distances have $L_{a,b}=2(N-a)(N-b)$, which tend to be smaller than other frequencies where $L_{a,b}=4(N-a)(N-b)$. Thus, this configuration has many frequencies which are not very common in the full lattice.

In a checkerboard, $\sqrt{a^2+b^2}$ only appears as a distance if either  $a$ and $b$ are both odd or both even. We note that is this causes about $1/2$ of $S_{a,b}$ to be zero for $a> b$ and $b \neq 0$. Additionally, we note that this causes about $1/4$ of  $S_{a,b}$ to be zero for $a= b$ or $b = 0$. We use the simplifying assumption that $S_{a,b}=L_{a,b}$ if $S_{a,b}\neq 0$. This ultimately increases our error estimate. We then have the following error estimate.
\begin{align}
\varepsilon & \ \leq \   \frac{4}{N+2}\left[\frac{3}{4}\left(4\left(\frac{N(5N-1)}{6}\right)-\frac{N(5N-1)}{6} \right)+\frac{1}{4} \left(\frac{N(5N-1)}{6}\right)  \right]\nonumber\\
 &\ \ \ \ + \frac{N-2}{N+2}\left[ \frac{1}{2}\left(4\left(\frac{N(3N-1)}{3}\right) -\frac{N(3N-1)}{3}\right)+\frac{1}{2} \left(\frac{N(3N-1)}{3}\right)  \right]\nonumber\\
& \ = \ 2N^2-\frac{N}{3}-\frac{2}{3(N+2)}+\frac{1}{3}.\end{align}

In Appendix \ref{checkerboardcalculations}, we more precisely calculate the frequency of the distances on the checkerboard lattice. These calculations can be used for a closer estimate. 
\end{example}

We now turn our attention to finding lower bounds for configurations of $p$ points. To minimize error, we want to preserve the same ratio of total distances, including repeated distances, to frequency that appeared in the $N \times N$ lattice for each unique distance. Thus, to calculate  a lower bound, we create an \textquotedblleft optimal" distribution of frequencies for $p$ points.   This optimal distribution cannot always be achieved, as not every distance distribution is realizable by a certain configuration.  To come up with the optimal distribution, we scale each $L_{a,b}$ by $N^4/p^2$ and round this number to the nearest integer. We then find the error for this optimal distribution in the same way as before. Code can easily be written to find the optimal distance distribution and calculate the error. For example, Figure \ref{fig:lower_bound_theoretical} demonstrates what this error is for a $100 \times 100$ lattice. Note that this figure does not include all possible values of $p$, rather it includes enough values to capture the behavior of the error.

\begin{figure}
    \centering
    \includegraphics[scale=.35]{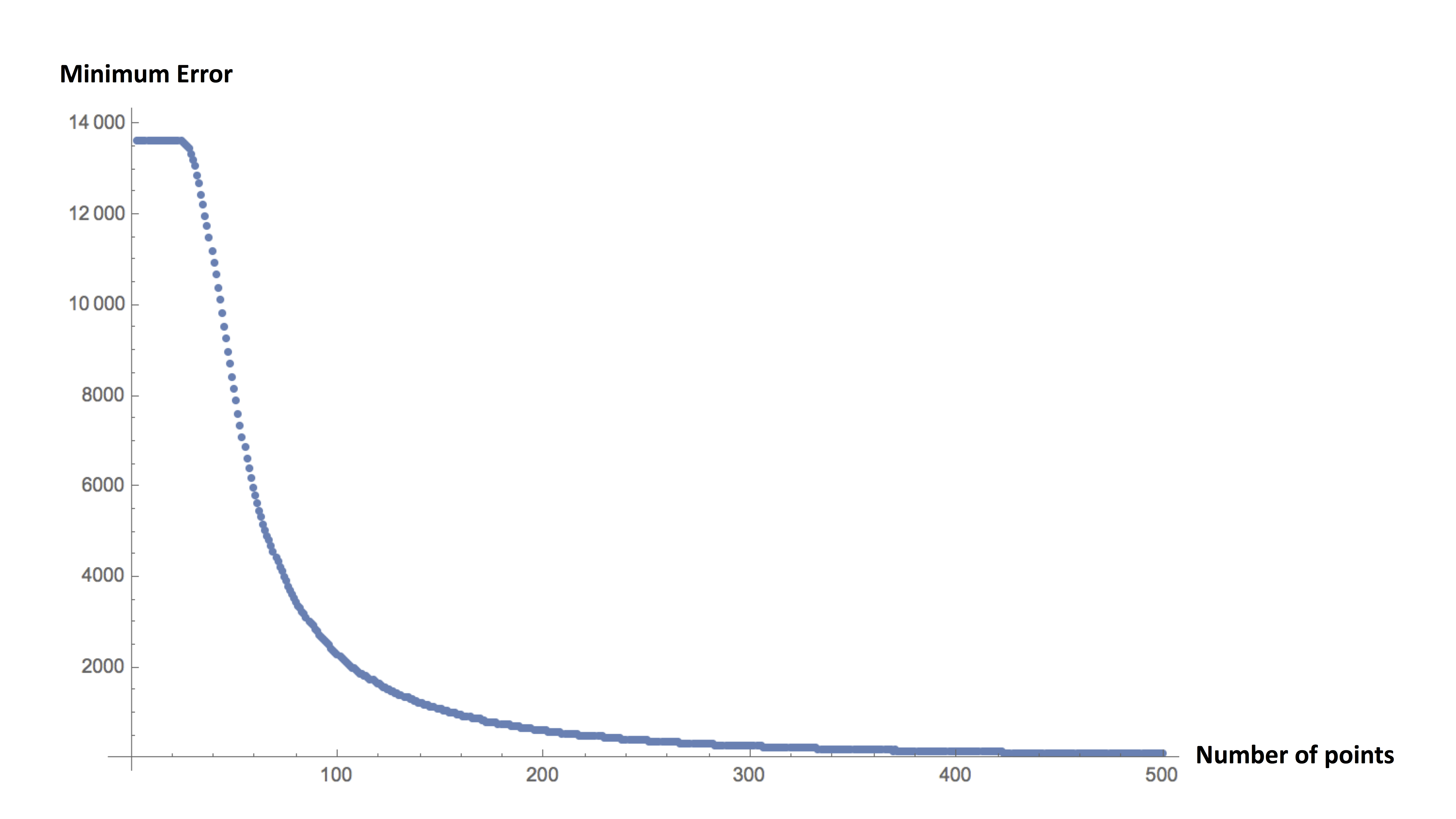}
    \caption{Computer generated data for the error of the optimal distance distribution in the $100 \times 100$ lattice.}
    \label{fig:lower_bound_theoretical}
\end{figure}

We begin with providing intuition for the behavior of the second part of the graph, the decreasing curve for large $p$.

Once the $L_{a,b}$'s are scaled down by $p^2/N^4$, rounded, and scaled up by $N^4/p^2$, they are a multiple of $N^4/p^2$. That means that the largest $\varepsilon_{a,b}$ can be is $N^4/2p^2$ and the smallest $\varepsilon_{a,b}$ can be is $0$. One might expect the average contribution to error to be $N^4/4p^2.$ However, for small $p$, many frequencies in the $N \times N$ lattice are much closer to $0$ than $N^4/2p^2$, as $N^4/2p^2$ is quite large. This explains the shape of the graph, however it does not provide a concrete value for the error.

On the other hand, for the smaller values of $p$, we can give a more precise description. Notice that the graph in Figure \ref{fig:lower_bound_theoretical} begins with a short horizontal line. More rigorously, we have \begin{equation}\label{errorboundsmallsubset}
\varepsilon \ \geq \  \binom{N^2}{2} \text{ if } p \ \leq \ \frac{N^2}{\sqrt{2F_N}}.
\end{equation}

Here, we note that $\binom{N^2}{2}$ is the precise value for error for the empty subset of the lattice, i.e.,  the sum of distance frequencies of the lattice itself. The idea is that the distance distribution of a small enough subset of the lattice, after rescaling, has a greater error than the empty subset, as its few distances are very overrepresented.

As we use a scaling factor of $N^{4}/p^2$, we notice that if $p$ is such that
\begin{equation}
\frac{N^{4}}{p^2} \ > \ 2L_{\sqrt{d}}
\end{equation}
for any distance $\sqrt{d}$ on the integer lattice, then the error of any subset of size $p$ is strictly greater than that of the empty subset, as for any $\sqrt{d}$, \begin{equation}
\left| \frac{N^{4}}{p^2}S_{\sqrt{d}}-L_{\sqrt{d}} \right| \ \geq \ \left| \frac{N^{4}}{p^2} - L_{\sqrt{n_{k}}} \right| \ \geq \ F_N
,\end{equation}
where, as previously defined, $F_N$ denotes the highest frequency on the $N\times N$ lattice.

\section{Future Work}
There are several ways to improve and extend our work. 
We have already done some characterizations of the subsets that maximize error and how the subsets transition from one configuration to another. We know we have a checkerboard configuration when $p=\left\lceil N^2/2 \right\rceil$ and we have a filled-in perimeter when $p=\sum_{i=1}^{m} 4(N-(2i-1))$ for different values of $m$. However the transition between these two configurations has still yet to be characterized.

Additionally, we hope to improve our lower bound work. Work can be done to find a characterization of the sets that minimize error. Furthermore, we hope to refine our lower bound formula by finding a rigorous lower bound the values of $p$ where $N^4/8p^2$ holds.

Finally, as Erd\H{o}s conjectured that all near-optimal sets have lattice structure, it is natural to extend results to other lattice structures. We expect that the error is similar.

\appendix
\section{Additional Calculations on the Lattice}\label{checkerboardcalculations}

As mentioned in Example \ref{ex:checkerboard}, our error calculations on the checkerboard lattice can be improved with an exact counting of its distances. First,  we may look at distances of the form $\sqrt{a^2}$, i.e., those for which $b=0$. We note these only appear on the checkerboard lattice when $a$ is even.

We count the total number of times that two points on the lattice are separated by a horizontal distance of $a$—this number matches the number of pairs which are separated by a vertical distance of $a$ by rotational symmetry. In the case where $N$ is even, each row of the lattice contains $N/2$ points, of which there are $N/2-a/2$ pairs at a distance $a$. We thus see that  
\begin{align}
	S_{a,0}& \ = \ 2N\left(  \frac{N}{2}-\frac{a}{2}\right) \nonumber \\
	& \ = \ N(N-a).
\end{align}

In the case where $N$ is odd, we assume that the checkerboard lattice is chosen in such a way that the four corners are included. In this case, we see that $(N+1)/2$ of the rows contain $(N+1)/2$ points, whereas $(N-1)/2$ rows contain $(N-1)/2$ points. Thus, 
\begin{align}
		S_{a,0}& \ = \ 2\left( \frac{N+1}{2}\left( \frac{N+1}{2}-\frac{a}{2} \right) +\frac{N-1}{2}\left( \frac{N-1}{2}-\frac{a}{2} \right)  \right)\nonumber \\
		& \ = \ N(N-a)+1.
\end{align}

We now look at distances of the form $\sqrt{2a^2}$, i.e., those formed by values $a  =  b$. We assume $N$ is odd.

As $N$ is odd, we notice that on the $N\times N$ checkerboard lattices all diagonals at $45^{\circ}$ contain an odd number of points. Moreover, for any $1\leq n\leq (N-1)/2$, there are four diagonals with $2n-1$ points; additionally, there are two diagonals with $N$ points. Finally, the distance $\sqrt{2a^2}=\sqrt{2}a$ appears on a diagonal with $2n-1$ points precisely $2n-1-a$ times; in the case where $n\leq a$, the distance is not present. 

In the case where $a$ is odd, we see $\sqrt{2}a$ is present on all diagonals with at least $a+2$ points. We have 
\begin{align}
S_{a,a}
&\ = \  4\left( \left( a+2 \right) -a+\left( a+4 \right) -a+\cdots+(N-2)-a \right)  +2(N-a) \nonumber \\
&\ = \  4\left(2+4+\cdots+N-a-2 \right)+2\left( N-a \right) \nonumber  \\
&\ = \  8\left( 1+\cdots+\frac{N-a-2}{2} \right) +2(N-a) \nonumber \\
&\ = \  8\left( \frac{\frac{N-a-2}{2}\frac{N-a}{2}}{2} \right) +2(N-a) \nonumber \\
&\ = \  (N-a-2)(N-a)+2(N-a)  \nonumber \\
&\ = \  (N-a)^2. 
\end{align}

In the case where $a$ is even, we see $\sqrt{2}a$ is present on all diagonals with at least $a+1$ points. Thus, 
\begin{align}
S_{a,a}
&\ = \  4\left( (a+1)-a+(a+3)-a+\cdots+(N-2)-a \right) +2(N-a)\nonumber \\
&\ = \  4\left( 1+3+\cdots+N-2-a \right) +2\left( N-a \right)  \nonumber\\
&\ = \  4\left( 2+4+\cdots+N-1-a \right) -4\left( \frac{N-a-1}{2} \right)+2(N-a) \nonumber \\
& \ = \ 8\left( \frac{\frac{N-a-1}{2}\frac{N-a+1}{2}}{2} \right) -2(N-a-1)+2(N-a) \nonumber\\
&\ = \  (N-a-1)(N-a+1)+2 \nonumber\\
&\ = \  (N-a)^2-1+2 \nonumber \\
&\ = \  (N-a)^2+1. 
\end{align}

In the case where $N$ is even, the calculation is slightly more complicated, as the only possible arrangement of $N ^2/2$ points in a checkerboard pattern results in a loss of 4-fold rotational symmetry. Without loss of generality, we see that the bottom-left to top-right diagonals each contain an even number of points, whereas the top-left to bottom-right diagonals each contain an odd number of points. In particular, the cardinalities of the bottom-left to top-right diagonals are \begin{equation}
2,\; 4,\; 6,\ldots , N-2,\; N,\; N-2,\ldots , 6,\; 4,\; 2
.\end{equation}
The top-left to bottom-right diagonals instead have cardinalities
\begin{equation}
1,\; 3,\; 5,\ldots, N-3,\;  N-1,\; N-1,\; N-3,\; ,\ldots , 5,\; 3,\; 1
.\end{equation}
For $a$ even, the distance $\sqrt{2}a$ is on any diagonal with cardinality $n>a$, with frequency $n-a$. Thus, we calculate
\begin{align}
S_{a,a}
 &\ = \  2\sum_{n =  a/2}^{(N-2)/2}(2n+1-a)+2\sum_{n = 1+a/2}^{(N-2)/2}\left( 2n-a \right)  +(N-a)  \nonumber\\
 &\ = \ 2\left( 1+3+\cdots+N-1-a \right) +2\left( 2+4+\cdots+N-2-a \right) +N-a \nonumber\\
 &\ = \ 2\left( \frac{\left( N-a-1 \right) \left( N-a \right) }{2} \right) +N-a \nonumber\\
 &\ = \ \left( N-a-1 \right) \left( N-a \right) +N-a \nonumber \\
 &\ = \ \left( N-a \right) ^2. 
\end{align}
For $a$ odd, the calculation is nearly identical, although $a+1$ becomes the first even diagonal cardinality on which a distance $a$ appears, and $a+2$ the first odd diagonal cardinality. Hence, we see
\begin{align}
S_{a,a}
 &\ = \ 2\sum_{n=(a+1)/2}^{(N-2)/2}(2n+1-a)+2\sum_{n=(a+1)/2}^{(N-2)/2}\left( 2n-a \right)  +(N-a)  \nonumber\\
 &\ = \  2\left( 2+4+\cdots+N-1-a \right)+2\left( 1+3+\cdots+N-2-a \right) +N-a \nonumber\\
 &\ = \ 2\left( \frac{\left( N-a-1 \right) \left( N-a \right) }{2} \right) +N-a\nonumber \\
 &\ = \ \left( N-a-1 \right) \left( N-a \right) +N-a \nonumber\\
 &\ = \ \left( N-a \right) ^2. 
\end{align}

\end{document}